\theoremstyle{plain}
\newtheorem{theorem}{Theorem}[section]
\newtheorem{lemma}{Lemma}[section]
\newtheorem{corollary}[theorem]{Corollary}
\theoremstyle{definition}
\newtheorem{definition}{Definition}
\theoremstyle{remark}
\newtheorem*{remark}{Remark}
\newcommand{\rr}{\mathbb{R}}
\newcommand{\acc}{\text{ACC}}
\title{Isoperimetric inequalities for wave fronts and a generalization of Menzin's conjecture for bicycle monodromy on surfaces of constant curvature}
\author{Sean Howe}
\address{Sean Howe \\ University of Arizona Math Dept.} 
\email{spkh@email.arizona.edu}
\thanks{This paper comes out of the Penn. State University 2008 Summer REU. Thanks to Sergei Tabachnikov for his guidance, Suchandan Pal for his contributions to our work, and to the NSF and PSU for their financial support.}
\author{Matthew Pancia}
\address{Matthew Pancia \\ University of Texas at Austin Math Dept.}
\email{mpancia@math.utexas.edu}
\author{Valentin Zakharevich}
\address{Valentin Zakharevich \\ Polytechnic Institute of NYU Math Dept.}
\email{vzakha02@students.poly.edu}
\date{\today}
\begin{document}
\pagestyle{plain}
\ifpdf
\DeclareGraphicsExtensions{.pdf, .jpg, .tif}
\else
\DeclareGraphicsExtensions{.eps, .jpg}
\fi

\begin{abstract}
	The classical isoperimetric inequality relates the lengths of curves to the areas that they bound. More specifically, we have that for a smooth, simple closed curve of length $L$ bounding area $A$ on a surface of constant curvature $c$, 
	\begin{equation*}
	L^2 \geq 4 \pi A - cA^2
	\end{equation*}
	with equality holding only if the curve is a geodesic circle. We prove generalizations of the isoperimetric inequality for both spherical and hyperbolic wave fronts (i.e. piecewise smooth curves which may have cusps).
	We then discuss ``bicycle curves'' using the generalized isoperimetric inequalities. The euclidean model of a bicycle is a unit segment $AB$ that can move so that it remains tangent to the trajectory of point $A$ (the rear wheel is fixed on the bicycle frame), as discussed in \cite{finncycle}, \cite{tabachold}, and \cite{tabachnew}. We extend this definition to a general Riemannian manifold, and concern ourselves in particular with bicycle curves in the hyperbolic plane $H^2$ and on the sphere $S^2$. We prove results along the lines of those in \cite{tabachnew} and resolve both spherical and hyperbolic versions of Menzin's conjecture, which relates the area bounded by a curve to its associated monodromy map. 
\end{abstract}

\maketitle

\section{Introduction}
	\subsection{Preliminaries}
	We will use the standard model of the sphere, $S^2$, as an embedded surface in $\mathbb{R}^3$, consisting of all position vectors with euclidean magnitude 1. 
	There are several models of the hyperbolic plane, $H^2$, but we will find it convenient to use the hyperboloid model \footnote{For more information about hyperbolic geometry, see \cite{hypergeom} or \cite{hypergeom2}.}.
	In the hyperboloid model, the hyperbolic plane is realized as the set of points in $\mathbb{R}^3$ whose position vectors have $z$ coordinate $> 0$ and norm -1 with respect to the quadratic form
	\[ds^2 = dx^2 + dy^2 - dz^2. \] 
	This corresponds to the positive sheet of the hyperboloid of two sheets embedded in $\rr^3$. Restricted to the tangent space of the hyperboloid, the above quadratic form is non-negative, and therefore defines a genuine Riemannian metric on $H^2$.
	
	The primary object we will be working with in this paper is called
	a wave front. 
	\begin{definition} Let $\gamma$ be a curve. We say that $\gamma$ is a \emph{wave front} if it is piecewise smooth and its only singularities are (semicubical) cusps. \end{definition} 
	
	\subsection{The Isoperimetric Inequality}
	As our paper deals with a generalization of the classical isoperimetric inequality to wave fronts, let us first discuss the history of the problem. The classical isoperimetric inequality relates the length of a plane curve to the area that it bounds. More precisely we have that for a smooth, simple closed curve of length $L$ bounding area $A$ on a surface of constant curvature $c$, 
\begin{equation*}
L^2 \geq 4 \pi A - cA^2
\end{equation*}
with equality holding only if the curve is a geodesic circle.

T.F. Banchoff and W.F. Pohl \cite{banchoff1971gii} generalized the isoperimetric inequality to non-simple curves 
in the euclidean plane. In the case of a non-simple curve one must redefine the notion of area. $A$ is replaced by the sum of the areas into which the curve divides the plane, weighted by the square of the winding number:
\begin{equation*}
L^2 \geq 4 \pi \int\limits_{\mathbb{R}^{2}} w_{f}^{2}(p)dA
\end{equation*}
where $f$ is a smooth immersion and $w_{f}(p)$ is a winding number of $f$ with respect to the point $p$. 

J.L. Weiner generalized this result to smooth immersions of a circle into the 2-sphere, in \cite{weiner} (which we use to prove some later results). Using the Gauss-Bonnet Theorem, the classical isoperimetric inequality can be written as 
\begin{equation}\label{Weiner ineq}
L^2 + K^2 \geq 4 \pi ^2 
\end{equation}
where $K = \int k ds$, with $k$ being the geodesic curvature and $ds$ the element of arc length. This inequality makes sense for immersions as well as embeddings. In \cite{weiner}, Weiner shows that \eqref{Weiner ineq} holds for smooth immersions regularly homotopic to a circle traversed once.

E. Teufel made a similar generalization for smooth immersions of a circle into the hyperbolic plane, in \cite{MR1129528}. Teufel showed that
\begin{equation*}
L^2 \geq 4 \pi \int\limits_{H^2}w_f^2(p)dH_p + \int\limits_{H^2\times H^2}w_f(p)w_f(q)dH_p \wedge dH_q
\end{equation*}
Here $f$ is a smooth immersion of a circle into the hyperbolic plane $H^2$, $w_f(p)$ is the winding number of $f$ with respect to the 
point $p$, and $dH_p$ is the area element of $H$ at the point $p$. In all three geometries the equality holds only if the curve traverses a geodesic circle a number of times in the same direction.

One of the goals of this paper will be to generalize the isoperimetric inequality to wave fronts. In \cite{hedgehogs}, Martinez-Maure has obtained similar inequalities for euclidean wave fronts, but we concern ourselves mainly with the hyperbolic plane and the sphere.

\section{On The Sphere}
	\subsection{Spherical Areas}
	Because the euclidean and hyperbolic planes share the property that all closed forms are exact, we have an unambiguous way of defining area. In the case of the plane, we can say that the area bounded by an oriented, simple curve $\gamma$ is the line integral 
\[ \int_\gamma x \; dy .\] Note that this integral can be rewritten as some integral with respect to the arclength element of the curve, $ds$. By Stokes' theorem, this is the same as integrating $dx \wedge dy$ over the region which has $\gamma$ as its boundary (with the induced orientation). We can extend this definition to wave fronts by signing the arclength element so that it changes when passing through cusps.

Due to the topology of the sphere, however, we do not have that its area form is the exterior derivative of any $1-$form. A way to get around this is to define a particular 2-chain which has a given curve as its boundary, and declare the area bounded by the curve to be the integral of the area form over this chain. This is the approach taken in \cite{arnoldsphere}, but we will concern ourselves with a more naive notion of area. Our definition of area will be restricted to a small class of curves, and is motivated by the Gauss-Bonnet Theorem.

\begin{definition}
	For a convex, simple, smooth spherical curve $\Gamma$, we define the \emph{area bounded by $\Gamma$} or $A(\Gamma)$ as 
	\[ A(\Gamma) = 2\pi - \int_\Gamma \kappa \]
	where $\kappa$ is the geodesic curvature of $\Gamma$ and the curve is oriented so that $\int_\Gamma \kappa$ is positive. In this case, we say that $\Gamma$ is \emph{properly oriented}.
\end{definition}

We also need a notion of algebraic length, defined for oriented, co-oriented wave fronts on any surface. Note, if we have such a wave front, the orientation and co-orientation together induce a sign on the arclength element of the curve. This is done by signing the arclength element with the sign of the frame formed by the co-orientation and orientation.  
\begin{definition}\label{def:}
	Let $\gamma$ be an oriented, co-oriented wave front, we define the \emph{algebraic length of $\gamma$}, $L(\gamma)$, to be 
	\[ \int_\gamma ds\]
	where $ds$ is the element of arclength, signed as described above. 
\end{definition}

The approach taken in \cite{arnoldsphere} by Arnold is significantly more complicated, but it allows us to make a reasonable definition of the area bounded by a much wider variety of curves, including those with cusps. The construction of this area (which we will denote by $ACC$ to avoid ambiguity) is analogous to using the winding number to define area in the plane, and is as follows (more specific details can be found in \cite{arnoldsphere}, we present only the basics). 

	For a closed, oriented, co-oriented spherical wave front $\gamma$, we have that $\gamma$ divides the sphere into distinct regions. For each region, we pick a point (not on $\gamma$) in this region and do stereographic projection using this point as the point at infinity. This turns $\gamma$ into a planar curve, whose normal vector has a winding number $i$ (the number of turns it makes around the unit circle after normalization, which is dependent only on the choice of region). To each region, we assign the coefficient $-\frac{i}{2}$. 
\begin{definition}
	The \emph{characteristic 2-chain} of $\gamma$ is the formal sum of the regions that $\gamma$ divides the sphere into with coefficients computed in the manner described above. Note that $\gamma$ is the boundary of this 2-chain, by a result in \cite{arnoldsphere}.
\end{definition}
Using this construction, we are now in a position to define a type of area on the sphere. 
\begin{definition}
	Let $c$ be the characteristic 2-chain of a wave front $\gamma$. Then the \emph{area of the characteristic 2-chain} of $\gamma$ or $\acc(\gamma)$ as the integral of the spherical area form over this 2-chain. That is, if $\omega$ is the spherical area form,
	\[ \acc(\gamma) = \int_c \omega. \]
\end{definition}
With this definition of area, we have a version of the Gauss-Bonnet Theorem (taken from \cite{arnoldsphere}) which will be useful for us later.
\begin{theorem}
	For an oriented, co-oriented front $\gamma$ with geodesic curvature $k$, we have 
	\begin{equation}
		\acc(\gamma) = \int_\gamma k(s) \; ds, 
	\end{equation}
	where $ds$ is the signed element of arclength, determined by the co-orientation of $\gamma$.
\end{theorem}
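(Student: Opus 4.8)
The plan is to reduce the statement to the classical spherical Gauss--Bonnet theorem applied region by region. Write the characteristic $2$-chain as a formal sum $c=\sum_R n_R\,R$, where $R$ ranges over the connected components of $S^2\setminus\gamma$ and $n_R=-i_R/2$ with $i_R$ the winding number computed in the definition. Since $\acc(\gamma)=\int_c\omega=\sum_R n_R\,\mathrm{Area}(R)$, I would apply Gauss--Bonnet to each region individually:
\[
\mathrm{Area}(R)=2\pi\chi(R)-\int_{\partial R}k\,ds-\sum_{j}\theta_j^R,
\]
where $\partial R$ is traversed with $R$ on the left, the $\theta_j^R$ are the exterior angles at the corners of $R$ (the cusps of $\gamma$ lying on $\partial R$, each contributing $\pm\pi$), and $k$ is the geodesic curvature. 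Summing against the weights $n_R$ splits $\acc(\gamma)$ into three pieces: a sum of boundary-curvature integrals, a topological sum $\sum_R 2\pi n_R\chi(R)$, and a sum of weighted cusp angles.

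For the curvature piece, each smooth arc of $\gamma$ bounds exactly two regions and is traversed in opposite directions as part of their respective boundaries, so the two occurrences of $\int k\,ds$ differ by a sign and by the weights of the two adjacent regions. The key input here is the hypothesis (granted from \cite{arnoldsphere}) that $\partial c=\gamma$: this says precisely that the coefficients $n_R$ jump by exactly $1$ across each arc of $\gamma$ in the direction of the co-orientation. Consequently the weighted boundary integrals on each arc collapse to a single copy of $\int k\,ds$ taken with the arclength signed by the co-orientation, and summing over arcs yields exactly $\int_\gamma k\,ds$. I would pin down all sign and normalization conventions (in particular the factor $-\tfrac12$) against the base case of a small, properly oriented geodesic circle of radius $\epsilon$, where a direct computation gives $i_{\mathrm{inner}}=1$, $i_{\mathrm{outer}}=-1$, and hence $\acc=2\pi\cos\epsilon=\int_\gamma k\,ds$.

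The main obstacle is showing that the two remaining pieces cancel, i.e. that
\[
\sum_R 2\pi n_R\chi(R)-\sum_R n_R\sum_j\theta_j^R=0.
\]
This is a purely combinatorial-topological identity relating the winding weights $n_R$, the Euler characteristics of the complementary regions, and the $\pm\pi$ exterior angles forced at the cusps, and it is where the specific definition of the characteristic chain does its real work. I expect to prove it by analyzing the cell structure that $\gamma$ induces on $S^2$ and tracking how $\chi(R)$, the local winding numbers, and the cusp contributions change as one passes a cusp or a transverse self-intersection; the $\pm\pi$ jumps of the signed arclength at cusps (built into the definition of algebraic length) should match the $\theta_j^R$ exactly. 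Once this identity is in hand the theorem follows; alternatively, having fixed the base case, one could instead establish it by showing that both sides of the desired equation transform identically under the elementary homotopies (passing a self-tangency, a triple point, or a cusp birth/death) that connect an arbitrary front to the base case.
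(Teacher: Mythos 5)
The first thing to note is that the paper contains no proof of this statement at all: the theorem is imported directly from \cite{arnoldsphere}, so your proposal cannot be measured against an in-paper argument and must stand on its own. Its skeleton is the natural one and is essentially sound: writing $\acc(\gamma)=\sum_R n_R\,\mathrm{Area}(R)$, applying Gauss--Bonnet with corners to each complementary region, and collapsing the weighted boundary-curvature integrals using the fact that the coefficients jump by exactly $1$ across each arc in the co-orientation direction (which is precisely the content of $\partial c=\gamma$, legitimately quoted from \cite{arnoldsphere}) is the right reduction, and your base-case computation for a properly oriented geodesic circle of radius $\epsilon$ (giving $\acc=2\pi\cos\epsilon=\int_\gamma k\,ds$, with coefficients $-\tfrac12$ and $+\tfrac12$ on the two caps) correctly pins down the normalization and the sign conventions.

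The genuine gap is that the heart of the proof --- the cancellation $\sum_R n_R\bigl(2\pi\chi(R)-\sum_j\theta_j^R\bigr)=0$ --- is only announced, not proved; everything preceding it is routine bookkeeping, so as written you have deferred exactly the step where the theorem's content lives, and neither of your two suggested strategies (direct cell-structure analysis, or invariance under perestroikas plus the base case) is carried out even in outline detail. There is also a misstatement in the displayed identity: the corners of $\partial R$ include not only the cusps of $\gamma$ (exterior angle $\pm\pi$) but also its transverse self-intersections, whose exterior angles are generally \emph{not} $\pm\pi$ and must be counted among the $\theta_j^R$; your parenthetical identifies all corners with cusps, although a later sentence shows you are aware of crossings. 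Reassuringly, the approach survives this correction: at a double point with crossing angle $\theta$, the four adjacent regions carry coefficients $n$, $n+a$, $n+a+b$, $n+b$ with $a,b\in\{\pm1\}$, and the weighted sum of exterior angles there equals $(2n+a+b)\pi$, which is an even multiple of $\pi$ independent of $\theta$ --- so the crossing contributions are $2\pi$-integral and can indeed be absorbed against the $2\pi n_R\chi(R)$ terms. This computation is evidence that your identity is true and provable by your methods, but until the combinatorial cancellation (including the crossing corners) is actually established, or the perestroika argument is equipped with a proof that both sides of the equation change identically through each elementary move connecting $\gamma$ to the base case, the proposal is a plausible program rather than a proof.
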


\begin{definition}\label{def:dual}
	Given an oriented, co-oriented front $\gamma$ on the sphere, we can obtain the \emph{dual} of $\gamma$, denoted $\gamma^*$, by moving every point of $\gamma$ a distance of $\pi \over 2$ along the great circle in the direction of the co-orientation. 
\end{definition}
We have that taking the dual of a curve turns double tangent points into self-intersections and turns cusps into inflections, as seen in Figure \ref{dualfig}. As well, from \cite{arnoldsphere} we have the following relations between the area and length of a curve and its dual. 
\begin{figure}[htbp]
	\centering
		\includegraphics[scale=.7]{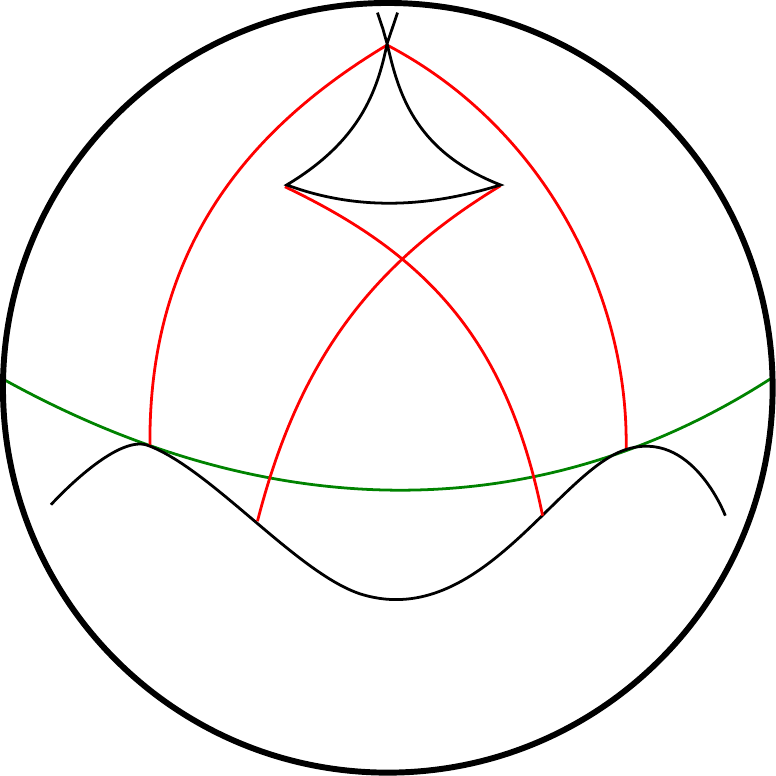}
	\caption{A spherical curve and its dual.}
	\label{dualfig}
\end{figure}
\begin{theorem}\label{thm:arealength}
	Let $\gamma$ be a closed, oriented, co-oriented wave front on the sphere. Then we have that 
	\begin{equation}\label{arealength}
		\acc(\gamma^*) = L(\gamma) \qquad \text{and} \qquad \acc(\gamma) = -L(\gamma^*). 
	\end{equation}
\end{theorem}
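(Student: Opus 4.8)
The plan is to reduce each identity to a pointwise relation between the natural $1$-forms along $\gamma$ and along $\gamma^\ast$ --- the signed arclength element $ds$ and the geodesic-curvature element $k\,ds$ --- and then to integrate and invoke the version of the Gauss--Bonnet theorem stated above. On each smooth arc I would parametrize $\gamma$ by arclength and use the spherical Darboux frame $(\gamma,T,N)$, where $T=\gamma'$ is the unit tangent and $N=\gamma\times T$; differentiating the relations $\gamma\cdot\gamma=1$, $T\cdot T=1$, $\gamma\cdot T=0$ yields the structure equations $\gamma'=T$, $T'=-\gamma+kN$, and $N'=-kT$, where $k$ is the geodesic curvature. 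By Definition~\ref{def:dual}, $\gamma^\ast$ is the endpoint of the length-$\tfrac{\pi}{2}$ geodesic leaving $\gamma$ in the co-orientation direction; writing the co-orientation as $N$ (the opposite choice is symmetric), this endpoint is exactly $\gamma^\ast(s)=N(s)$.

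First I would differentiate: $(\gamma^\ast)'=N'=-kT$. This shows that the unit tangent of $\gamma^\ast$ is $\pm T$ according to the sign of $k$, that its unsigned speed is $|k|$, and --- reading off the forward tangent of the connecting geodesic $t\mapsto\cos t\,\gamma+\sin t\,N$ at its far endpoint --- that the co-orientation the duality assigns to $\gamma^\ast$ is $-\gamma$. With these data a short computation using the same structure equations gives the geodesic curvature of $\gamma^\ast$ as $-1/k$ and its signed arclength element as $ds^\ast=-k\,ds$. The upshot is the pair of pointwise identities
\[
  k^\ast\,ds^\ast = ds, \qquad ds^\ast = -k\,ds,
\]
so that under duality the arclength and curvature forms are exchanged, up to sign.

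Granting these, both assertions follow by integration. Applying the Gauss--Bonnet theorem above to $\gamma^\ast$ and using the first identity,
\[
  \acc(\gamma^\ast)=\int_{\gamma^\ast}k^\ast\,ds^\ast=\int_\gamma ds=L(\gamma);
\]
integrating the second identity and then applying Gauss--Bonnet to $\gamma$,
\[
  L(\gamma^\ast)=\int_{\gamma^\ast}ds^\ast=-\int_\gamma k\,ds=-\acc(\gamma),
\]
which is the second relation. I would sanity-check the conventions against the model case of a geodesic circle of spherical radius $r$, where $\gamma^\ast$ is a circle of radius $\tfrac{\pi}{2}-r$ and the identities reduce to the elementary relations $\cot r\cdot 2\pi\sin r = 2\pi\cos r$ and $\tan r\cdot 2\pi\cos r = 2\pi\sin r$, so that the computed lengths and areas match on the nose.

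The step I expect to be the main obstacle is the behavior at the singular points together with the bookkeeping of signs there. Where $k$ vanishes (an inflection of $\gamma$) one has $ds^\ast=-k\,ds\to 0$ and $k^\ast=-1/k\to\infty$, which is exactly where $\gamma^\ast$ develops a cusp; dually, at a cusp of $\gamma$ the curvature $k$ blows up and $\gamma^\ast$ has an inflection. At such points the Darboux frame and the arclength parametrization degenerate, so the clean arcwise computation is valid only on the open smooth pieces. To make the argument global I would reparametrize by the smooth immersed Legendrian lift $s\mapsto(\gamma(s),N(s))$ in the unit tangent bundle of $S^2$, verify that the two $1$-forms extend continuously across cusps and inflections, and confirm that the signs induced on the arclength element by the orientation/co-orientation frame are compatible across each transition, so that the arcwise identities sum to the stated global ones. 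Establishing this sign compatibility at the transitions --- rather than the frame computation itself --- is the delicate part.
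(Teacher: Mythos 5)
The paper offers no proof of this theorem at all: it is quoted directly from Arnold \cite{arnoldsphere} (``from \cite{arnoldsphere} we have the following relations''), so your blind proposal supplies an argument where the paper supplies only a citation. Your argument is correct in substance, and it is essentially the duality computation underlying Arnold's result. The frame computation checks out: with co-orientation $N=\gamma\times T$ the dual is $\gamma^*=N$, one gets $(\gamma^*)'=-kT$, the induced co-orientation $-\gamma$ is the forward tangent of the connecting geodesic at $t=\pi/2$, and, measuring curvature against the co-orienting normal and signing $ds$ by the paper's frame convention, one indeed finds $k^*=-1/k$ and $ds^*=-k\,ds$, hence $k^*\,ds^*=ds$; integrating and invoking the quoted Gauss--Bonnet theorem for fronts yields both identities exactly as you write, and the sign asymmetry between the two is forced by the fact that $\gamma^{**}$ is the \emph{antipodal} front of $\gamma$, consistent with your conventions (your circle check $r\mapsto \pi/2-r$ confirms the magnitudes; with the signed elements the two relations hold on the nose, e.g.\ $\acc(\gamma^*)=-2\pi\sin r=L(\gamma)$ for the inward-co-oriented circle). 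The one step you defer --- sign bookkeeping across cusps and inflections --- is genuinely required for a complete proof, but it is lighter than you fear, and your own suggested device resolves it immediately: on the Legendrian lift $u\mapsto(\gamma(u),n(u))$, which is an immersion even at cusps of the front, the signed length form and the curvature form are smooth $1$-forms on the parametrizing circle, and the lift of the dual front is simply $u\mapsto(n(u),-\gamma(u))$; thus duality exchanges the two forms \emph{globally} on the lift, and the arcwise identities extend across the isolated singular parameters by continuity, with no separate matching argument at each transition. With that observation written out, your proof is complete and self-contained --- something the paper, which treats the theorem as a black box, does not attempt.
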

	\subsection{A Spherical Isoperimetric Inequality}
	Using the above, we are in a position to prove a spherical version of the classical isoperimetric inequality for wave fronts. 
	\begin{theorem}\label{thm:wcurve}
	Let $\gamma$ be a wave front without inflection points and having an even number of double-tangent points.
	Then 
	\begin{equation}\label{wcurve}
		\acc(\gamma)^{2} + L(\gamma)^{2} \geq 4 \pi^{2}.
	\end{equation}
	\end{theorem}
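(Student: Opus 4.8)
The plan is to reduce the statement to Weiner's inequality \eqref{Weiner ineq}, applied not to $\gamma$ itself but to its dual $\gamma^*$. The obstruction to applying Weiner directly is that $\gamma$ is a wave front and may carry cusps, so it need not be a smooth immersion; but by Definition \ref{def:dual} the dual operation trades cusps for inflections and double-tangents for self-intersections, so $\gamma^*$ has a good chance of being a genuine immersion. The first step is to record the effect of duality on the two quantities appearing in \eqref{wcurve}. By Theorem \ref{thm:arealength} we have $\acc(\gamma^*) = L(\gamma)$ and $\acc(\gamma) = -L(\gamma^*)$, so that
\[ L(\gamma^*)^2 + \acc(\gamma^*)^2 = \acc(\gamma)^2 + L(\gamma)^2. \]
Thus it suffices to prove $L(\gamma^*)^2 + \acc(\gamma^*)^2 \geq 4\pi^2$. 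Moreover, by the Gauss--Bonnet theorem stated above, $\acc(\gamma^*) = \int_{\gamma^*} k\,ds$ is exactly the total geodesic curvature $K$ of $\gamma^*$, so the quantity I wish to bound is precisely the left-hand side of \eqref{Weiner ineq} for the curve $\gamma^*$.

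Next I would verify that $\gamma^*$ meets the hypotheses of Weiner's theorem, namely that it is a smooth immersion regularly homotopic to a once-traversed circle. This is where the two hypotheses on $\gamma$ enter. Since $\gamma$ has no inflection points, its dual $\gamma^*$ has no cusps; as cusps are the only singularities a wave front may possess, $\gamma^*$ is a genuine smooth immersion of the circle into $S^2$. The ``no inflection'' hypothesis thus supplies smoothness of $\gamma^*$, while the ``even number of double-tangents'' hypothesis is reserved to pin down its regular homotopy class.

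The main obstacle is precisely this regular homotopy classification. Immersions of $S^1$ into $S^2$ fall into two regular homotopy classes, detected by the parity of the number of self-intersections, with a once-traversed circle having none. Since duality converts each double-tangent point of $\gamma$ into a self-intersection of $\gamma^*$, the hypothesis that $\gamma$ has an even number of double-tangent points should force $\gamma^*$ to have an even number of self-intersections, placing it in the class of the once-traversed circle. Granting this, Weiner's inequality \eqref{Weiner ineq} applies to $\gamma^*$ and yields $L(\gamma^*)^2 + \acc(\gamma^*)^2 \geq 4\pi^2$, which by the identity above is exactly \eqref{wcurve}. The delicate point to nail down carefully is this parity/regular-homotopy step: I would need to confirm that the self-intersections of $\gamma^*$ arise only from double-tangents of $\gamma$ and that duality neither creates nor cancels self-intersections from other sources, so that the parity count is genuinely governed by the stated hypothesis.
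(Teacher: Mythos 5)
Your proposal is essentially identical to the paper's proof: the paper also passes to the dual $\gamma^*$, notes that $\gamma^*$ is smooth since $\gamma$ has no inflections and regularly homotopic to a once-traversed circle since $\gamma$ has an even number of double-tangent points, applies Weiner's inequality to $\gamma^*$, and transfers the result back via Theorem \ref{thm:arealength}. The ``delicate'' parity point you flag is asserted in the paper with no more justification than you give, so your write-up is, if anything, slightly more careful than the original.
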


	\begin{proof}
		Consider the dual to $\gamma$. We have that $\gamma^*$ is smooth because $\gamma$ has no inflection points and is also homotopic to a circle traversed once because $\gamma$ has an even number of double-tangent points. By \cite{weiner} we then have that
		\[ 
		\acc(\gamma^*)^{2} + L(\gamma^*)^{2} \geq 4 \pi^{2}. 
		\]
		Using \eqref{arealength}, we obtain the desired result. 
	\end{proof}
\section{In the Hyperbolic Plane}
	\subsection{A Hyperbolic Isoperimetric Inequality}
		Now we seek to prove a similar isoperimetric inequality for hyperbolic wave fronts, using an approach similar to those taken by Anisov in \cite{arealength} and \cite{intwavefront}. Analogously to the approach we took on the sphere, we are going to introduce a new function $C(\gamma) = \displaystyle\int_{\gamma}k_{\gamma}$ where $k_{\gamma}$ is the geodesic curvature of $\gamma$. We are going to let $A(\gamma)$ be defined for simple curves as the regular area of $\gamma$. From the Gauss-Bonnet theorem, we have that $C(\gamma) = A(\gamma) + 2\pi$.
	
	\begin{theorem}
	Let $\gamma$ be a closed, oriented, co-oriented and horocyclically convex wave front (i.e. its geodesic curvature has magnitude that is everywhere $\geq 1$) with turning number 1 in $H^2$. Let $C(\gamma) = \displaystyle\int_{\gamma}k_{\gamma}$, where the arc 
	length changes sign at every cusp, and let $L(\gamma)$ be its algebraic length. Then we have 
	\begin{equation*}
	L(\gamma)^2 + 4\pi^2 - C(\gamma)^2 \geq 0,
	\end{equation*}
	with equality iff $\gamma$ is a geodesic circle. 
	\end{theorem}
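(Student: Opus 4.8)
The plan is to reduce the inequality to a one-variable Fourier (Wirtinger-type) estimate by representing the front through a support function, exactly as in the classical proof of the isoperimetric inequality but with hyperbolic rather than Euclidean support formulas. The guiding observation is that the claimed inequality is \emph{not} new in the smooth simple case: rewriting $C(\gamma) = A(\gamma) + 2\pi$ via Gauss--Bonnet and substituting into the classical hyperbolic isoperimetric inequality $L^2 \geq 4\pi A + A^2$ gives exactly $L^2 + 4\pi^2 - C^2 \geq 0$, since $4\pi A + A^2 = 4\pi(C-2\pi) + (C-2\pi)^2 = C^2 - 4\pi^2$. So the real content is the extension to wave fronts with cusps and turning number $1$, and the hypothesis $|k_\gamma| \geq 1$ (horocyclic convexity) is precisely what makes a global support parametrization available.

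First I would use horocyclic convexity together with turning number $1$ to reparametrize $\gamma$ by the angle $\phi \in [0,2\pi)$ of its co-orienting normal: since $|k_\gamma| \geq 1 > 0$ the normal direction is monotone along each smooth arc, and because the turning number is $1$ it sweeps the circle exactly once. This lets me introduce a hyperbolic support function $h(\phi)$ encoding the position of the supporting curve with normal direction $\phi$, with cusps appearing exactly where the signed arclength element changes sign --- the same sign convention already built into the definitions of $L(\gamma)$ and $C(\gamma)$.

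Next I would express both invariants as integrals of $h$ and its derivatives over $[0,2\pi]$. In the hyperboloid model these formulas involve $\cosh$ and $\sinh$ rather than the clean Euclidean radius-of-curvature relation $\rho = h + h''$, but the structure is the same: both $C(\gamma)$ and $L(\gamma)$ become explicit integral functionals of $h$ and $h'$ over one full period, the period being pinned to $2\pi$ by the turning number. Expanding $h$ in a Fourier series and applying Parseval, I expect the combination $L(\gamma)^2 + 4\pi^2 - C(\gamma)^2$ to collapse to a mode-by-mode sum in which the $4\pi^2$ absorbs the zeroth-mode contribution and every higher harmonic enters with a nonnegative coefficient, yielding the inequality. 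Equality would force all modes above the first to vanish, so $h$ is constant up to a first harmonic (which only relocates the center), i.e.\ $\gamma$ is a geodesic circle.

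The main obstacle is twofold: deriving the correct hyperbolic support-function expressions for $L$ and $C$ --- the hyperbolic trigonometry makes these messier than the planar case, and one must check that the cusp sign conventions are consistent with the signed $ds$ --- and then verifying that the resulting quadratic form in the Fourier coefficients is genuinely nonnegative with the asserted equality locus. A conceptually parallel route would imitate the spherical argument by passing to the polar dual $\gamma^*$, which in the hyperboloid model lands in de Sitter space; but since no Weiner-type inequality is available there, that route would still bottom out in the same Fourier estimate, so I would carry out the support-function computation directly.
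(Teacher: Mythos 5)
Your opening observation is exactly right, and it is in fact the endgame of the paper's actual proof: for a smooth simple curve, Gauss--Bonnet ($C = A + 2\pi$) turns the classical hyperbolic inequality $L^2 \geq 4\pi A + A^2$ into $L^2 + 4\pi^2 - C^2 \geq 0$. But the mechanism you propose for handling cusps --- expand the Leichtwei\ss{} support function $H$ in a Fourier series and let Parseval diagonalize $L^2 + 4\pi^2 - C^2$ --- has a genuine gap at its crux. The Hurwitz/Wirtinger argument works in the Euclidean plane because $L = \int h \, d\phi$ is \emph{linear} and $A = \frac{1}{2}\int h(h+h'')\,d\phi$ is \emph{quadratic} in the support function; in $H^2$ that structure is destroyed. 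As the curvature formula the paper quotes already shows, $|k| = \left|\bigl(\ddot{H}\sinh H + (1+\dot{H}^2)\cosh H\bigr)/\bigl(\ddot{H}\cosh H + (1+\dot{H}^2)\sinh H\bigr)\right|$ with cusps at the zeros of the denominator, the arclength and curvature integrands involve $\cosh H$, $\sinh H$, and $\dot H^2$ nonlinearly, so neither $L(\gamma)$ nor $C(\gamma)$ is a quadratic functional of $H$; Parseval then yields no mode-by-mode decomposition and there is no ``nonnegative coefficient per harmonic'' to verify. Even substitutions that tidy individual pieces (note $\ddot{H}\cosh H + (1+\dot{H}^2)\sinh H = \ddot{G} + G$ with $G = \sinh H$, and likewise the numerator is $\ddot{F}+F$ with $F = \cosh H$) leave the constraint $F^2 - G^2 = 1$ coupling the harmonics nonlinearly. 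Your equality analysis inherits the same Euclidean bias: hyperbolic isometries act nonlinearly on support functions, so recentering a geodesic circle does not merely add a first harmonic to $H$, and ``all modes above the first vanish'' does not describe the equality locus. You flag this computation as ``the main obstacle,'' but it is not a technical mess to be powered through; it is the point at which the approach fails, and your fallback via the de Sitter dual bottoms out, as you yourself note, in the same unavailable estimate.

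The paper's reduction to the smooth simple case is entirely different, and it is the idea your proposal is missing: flow $\gamma$ through its equidistant fronts $\gamma_t$. One computes $\overline{C}'(t) = \overline{L}(t)$ (Gauss--Bonnet on the strip between corresponding smooth arcs) and $\overline{L}'(t) = \overline{C}(t)$ (a local Steiner formula, using that a hyperbolic circle of radius $dt$ has circumference $2\pi\sinh dt$), whence $\overline{L}(t) = L_0\cosh t + C_0\sinh t$ and $\overline{C}(t) = L_0\sinh t + C_0\cosh t$, so the quantity $\overline{L}(t)^2 + 4\pi^2 - \overline{C}(t)^2$ is \emph{invariant} under the flow. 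Horocyclic convexity and turning number one enter only qualitatively, through the support function: adding a large constant $t$ to $H$ makes the $e^{H}$ terms dominate in both numerator and denominator of the curvature formula, so for $t$ large $\gamma_t$ is smooth with positive curvature and turning number one, hence simple; applying the classical inequality there (precisely your Gauss--Bonnet reformulation) and transporting back along the invariant gives the theorem, with equality forcing some equidistant --- hence $\gamma$ itself --- to be a geodesic circle. So the support function is the right tool, but as a device for a one-line smoothing lemma, not as the carrier of a Fourier estimate.
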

	
	\begin{proof}
	Consider the family $\gamma_{t}$ of equidistant fronts of $\gamma$. Let $\overline{C}(t)$ and $\overline{L}(t)$ be $C(\gamma_{t})$ and 
	$L(\gamma_{t})$ respectively. We are going to show that $\overline{C'}(t) = \overline{L}(t)$ and $\overline{L'}(t) = \overline{C}(t)$ (where $'$ denotes differentiation with respect to $t$).
	To do this we are going to consider the contribution from each smooth arc. 
	\begin{figure}[hpb]
		\centering
			\includegraphics[width=4in]{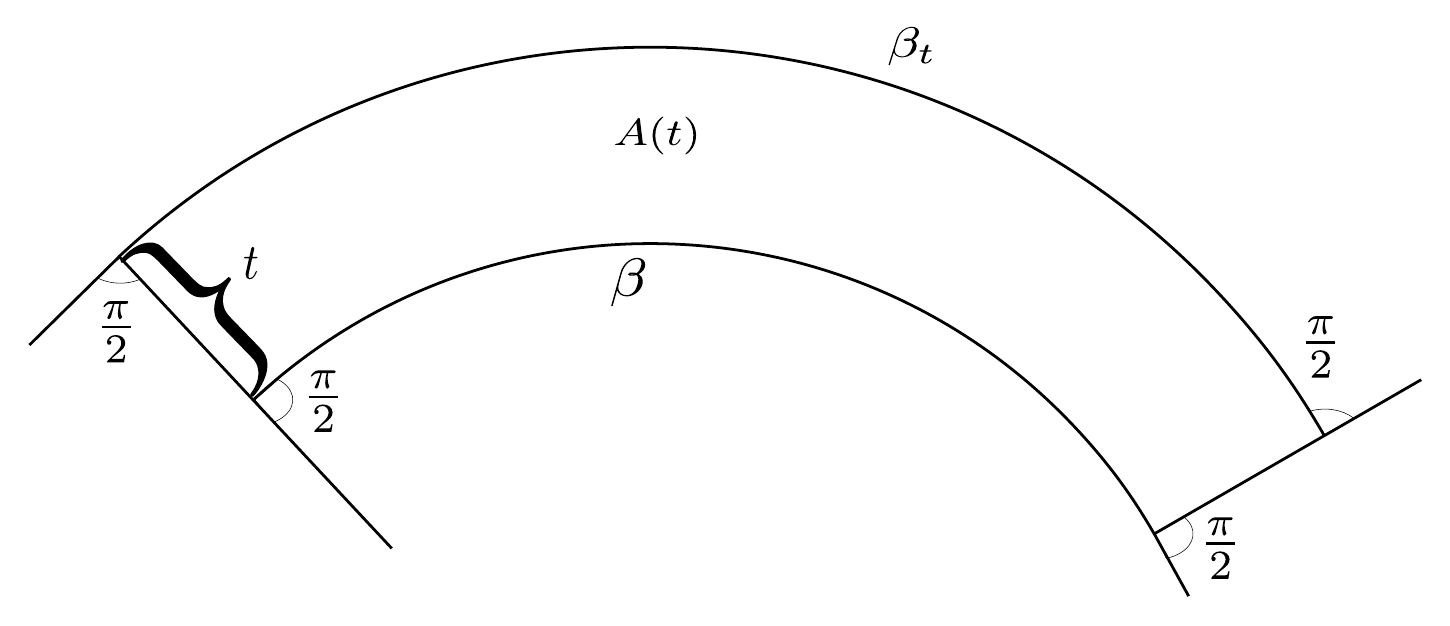}
		\caption{Smooth arcs of $\gamma$ and $\gamma_t$ }
		\label{hypsetup}
	\end{figure}
	
	Let $\beta$ be a smooth arc of the curve $\gamma$, and let $\beta_{t}$ be its corresponding arc in $\gamma_{t}$. Let $A(t)$ 
	be the area of the region between the two arcs, as in Figure \ref{hypsetup}. From the Gauss-Bonnet theorem we have that 
	\[ 2\pi + \int_{\beta_{t}}{k_{t}} - \int_{\beta}{k}\ = 2\pi + A(t) \] and
	\[  C(\beta_{t}) - C(\beta) = A(t). \]
	
	By looking at an infinitesimal change in $A$, we can see that $ A'(t) = \overline{L}(t)$, and by differentiating both sides we obtain
	\[
		C'(\beta_{t})= A'(t) = \overline{L}(t).
	\]
	
	To show that $L_{\gamma}'(t) = C_{\gamma}(t)$ we are going to apply a local Steiner formula to a polygonal approximation of the curve $\gamma$ by geodesic arcs $\beta_t$ and equidistant arcs $\beta_{t+dt}$ as in Figure \ref{hypsteiner}.
	
	\begin{figure}[htpb]
		\centering
			\includegraphics[width=4in]{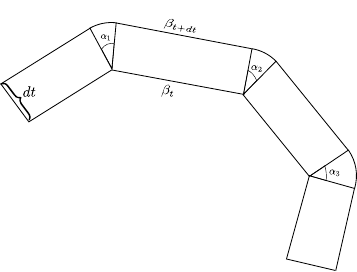}
			\caption{Geodesic approximation to $\gamma$}
			\label{hypsteiner}
	\end{figure}
  
  Since the circumference of a circle of radius $dt$ in the hyperbolic plane is $2\pi\sinh{dt}$ we see that
  
  \[\overline{L}(t+dt)= \overline{L}(t) + (\sum \alpha_{j})(\sinh{dt}).\]
	We also have that
  \[ \sinh{dt} = dt + \frac{dt^{3}}{3!}+ \dotsm .\]
  Ignoring the terms in $dt$ of higher order we see that 
  \[ \frac{\overline{L}(t+dt) - \overline{L}(t)}{dt} = \overline{L'}(t) = (\sum \alpha_{j}).\]
  
  Taking the limit as the number of vertices of $\beta_{t}$ goes to infinity we get that $(\sum \alpha_{j}) \rightarrow \overline{C}(t)$. 
  Therefore $\overline{L'}(t) = \overline{C}(t)$.

  Now we want to find the explicit formula for $\overline{C}(t)$ and $\overline{L}(t)$. To do this we are going to solve the system of differential equations
  \begin{gather*}
	  \overline{L'}(t) = \overline{C}(t) \\ 
	  \overline{C'}(t) = \overline{L}(t) 
  \end{gather*}
  from which we get that 
  \begin{gather*}
	  \overline{L}(t) = L_0\cosh{t} + C_0\sinh{t}\\
	 \overline{C}(t) = L_0\sinh{t} + C_0\cosh{t}
  \end{gather*}
  where $L_0= \overline{L}(0)$ and $C_0 = \overline{C}(0)$. We now note that the quantity $\overline{L}(t)^2 + 4\pi^2 - \overline{C}(t)^2$ is independent of the value of $t$. The following lemma shows that for a very large value of $t$, $\gamma_{t}$ is simple and smooth, so that we can apply the classical isoperimetric inequality. 
\begin{lemma} Let $\gamma$ be a closed, horocyclically convex wave front with turning number one and let $\gamma _{t}$ be the family of equidistant fronts of $\gamma$. Then, for sufficiently large $t$, $\gamma_{t}$ is smooth and simple.  
\end{lemma}
\begin{proof}
We use the hyperbolic support function of Leichtwei\ss, found in \cite{hypsupport}. The hyperbolic support function is a periodic function, defined for each wave front. This function characterizes the curve, and is unique up to the choice of origin and the original direction. Note that this is a hyperbolic generalization of the support function that exists for euclidean curves (see \cite{support}, for example). We have that the hyperbolic support function is defined only for curves whose geodesic curvature is greater than or equal to one (are horocyclically convex), with turning number one. Adding a constant to the support function generates an equidistant curve to the original. 
The magnitude of the curvature $k$ is given by 
\begin{equation*}
|k| = \left| \frac{\ddot{H}\sinh H +(1+\dot{H}^2)\cosh H}{\ddot{H}\cosh H +(1+\dot{H}^2)\sinh H} \right|
\end{equation*}
where $H$ is the hyperbolic support function. The cusps correspond to the value of the denominator being zero. The claim is that if we add a sufficiently large constant to $H$ then the the geodesic curvature will be always greater than zero and less then $\infty$. Our result then follows, because under those conditions we have a smooth curve with everywhere positive curvature and turning number one, which implies that the curve is simple. 

First we show that for a large $t$ the denominator is not zero.
\begin{equation*}
\ddot{H}\cosh H +(1+\dot{H}^2)\sinh H = \frac{1}{2}(e^{H}(\ddot{H} + 1 + \dot{H}^2) + e^{-H}(\ddot{H} - 1 - \dot{H}^2))
\end{equation*}
Let 
\[a = \ddot{H} + 1 + \dot{H}^2 \qquad b = \ddot{H} - 1 - \dot{H}^2 . \]

The quantities $a$ and $b$ do not change when a constant is added to the function. Clearly both $a$ and $b$ can not be zero.
The only case where the denominator could be zero is if both $a$ and $b$ are non zero and have opposite signs. In this case if
we add a large constant to $H$, the $e^{H}$ term dominates and the denominator will not be zero. The same argument works for the 
numerator of the fraction. Since $H$ is periodic there exists a constant such that both the numerator and the denominator
of the expression of the geodesic curvature never vanish.

\end{proof}
Let $t'$ be the point at which $\gamma_{t'}$ is a simple, smooth curve. By the classical isoperimetric inequality, we have that 
  \[\overline{L}(t')^2 - 4\pi A(\gamma_{t'}) - A(\gamma_{t'})^2 \geq 0.\]
  Using the Gauss-Bonnet theorem and the fact that $\gamma_{t'}$ is simple we get that
  \[ \overline{L}(t')^2 + 4\pi ^2 - \overline{C}(t')^2 \geq 0.\]
  But since the quantity on the left is independent of $t$ we see that
  \[L(\gamma)^2 + 4\pi^2 - C(\gamma)^2 \geq 0.\]
  
  If $\gamma$ is a geodesic circle, then the classical isoperimetric inequality applies, and the inequality in the statement of the theorem becomes an equality. Conversely, if we have equality for $\gamma$ then some equidistant of $\gamma$ is a geodesic circle, which implies that $\gamma$ itself is a geodesic circle. 
	\end{proof}
\section{An Application to Bicycle Curves}
We now discuss the subject of bicycle curves, where isoperimetric inequalities for wave fronts will prove to be a useful tool. 

We first define our model of bicycle motion. The bicycle frame is represented by a geodesic line segment $AB$ of constant length $l$, while the back and front wheels are represented by $A$ and $B$ respectively. The path of the front wheel is restricted so that $AB$ is always tangent to the velocity of the back wheel, $A$. This is illustrated in Figure \ref{fig:notation2}. 
	\begin{figure}[htbp]
		\centering
			\includegraphics[scale=1]{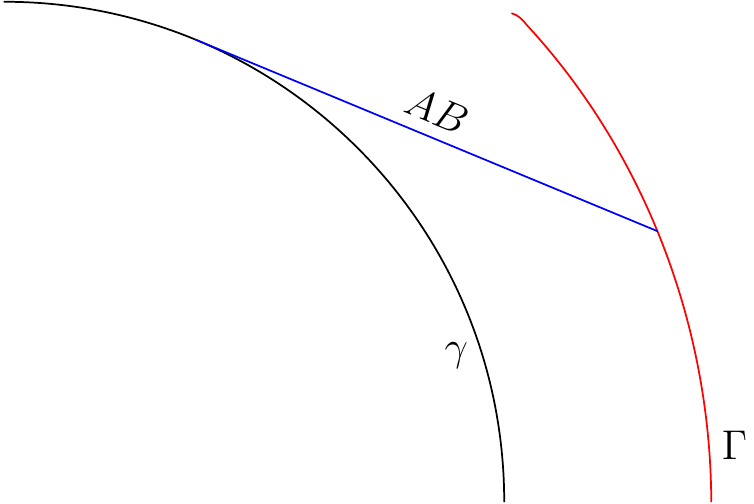}
		\caption{The bicycle model.}
		\label{fig:notation2}
	\end{figure}
	An important construction that we will consider is the monodromy map associated to a particular front wheel curve. 
	\begin{definition}
		Let $\Gamma$ be a curve representing the motion of the front wheel. Up to choice of relative initial position of the back wheel, the path of the back wheel is determined by $\Gamma$. The \emph{monodromy map} associated to $\Gamma$ or $M(\Gamma): S^1 \rightarrow S^1$ is the function that sends each choice of starting position for the back wheel to its terminal position. 
	\end{definition}

Finally, let's establish the notation that will be used for the remainder of the paper. The space that each result applies to ($S^2$ or $H^2$) will be noted in parentheses. By $AB$ denote the segment representing the bicycle frame from back wheel to front. $\Gamma$ will denote the path of the front wheel and $\gamma$ the path of the back wheel. Let $s$ be the arc length parameter for $\Gamma$ and $t$ the arc length parameter for $\gamma$, $\kappa$ and $k$ the respective curvatures. Let $\alpha$ be the angle between $\Gamma'$ and $BA$, which can be parameterized by either $s$ or $t$. Finally, $L(\gamma)$ will denote the algebraic length of the curve $\gamma$. 
	\subsection{General Results}
		In this section we develop some general results about bicycle curves in $H^2$ and $S^2$ that will allow us to describe how they evolve. These results will primarily concern themselves with the relationships between properties of the front and back wheel curves, which will be essential to establishing our later results about the monodromy map and our reformulation of Menzin's conjecture. 
		\begin{theorem}[$S^2$]\label{thm:sphdiffeq}
	Let $T_{l}(\gamma)$ be the function that sends a rear wheel curve $\gamma$ to the corresponding front wheel curve for a bicycle of length $l$. The condition $T_{l}(\gamma)=\Gamma$ is equivalent to the differential
	equation on the function $\alpha(s)$:

	\begin{equation}\label{sphdiffeq}
	\frac{d\alpha(s)}{ds}+\kappa(s)=\cot(l)\sin(\alpha).\end{equation}
	As well,
	\begin{equation}\label{sphdifflength}\left| \frac{dt}{ds} \right| = \left| \cos(\alpha) \right|.\end{equation}
\end{theorem}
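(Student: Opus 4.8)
The plan is to work in the ambient space $\mathbb{R}^3$ using the embedding $S^2 \subset \mathbb{R}^3$, and to set up a spherical Frenet frame along the front wheel curve $\Gamma$. Writing $B = \Gamma(s)$ for the position vector, $T = \Gamma'(s)$ for the unit tangent, and $N = B \times T$ for the unit normal in $T_B S^2$, the triple $\{B,T,N\}$ is a positively oriented orthonormal basis of $\mathbb{R}^3$ obeying the spherical Frenet equations
\begin{gather*}
B' = T, \qquad T' = -B + \kappa N, \qquad N' = -\kappa T,
\end{gather*}
where $\kappa$ is the geodesic curvature of $\Gamma$ and $'$ denotes $d/ds$. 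These follow from differentiating the orthonormality relations, and they are the engine of the whole computation.

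Next I would locate the back wheel. Since the frame $BA$ is a geodesic arc of length $l$ making angle $\alpha$ with $\Gamma' = T$, the unit direction from $B$ toward $A$ is $u = \cos\alpha\,T + \sin\alpha\,N$, and the spherical exponential map gives
\begin{equation*}
A = \cos l\,B + \sin l\,(\cos\alpha\,T + \sin\alpha\,N).
\end{equation*}
The tangent to this geodesic at the point $A$ is $w = -\sin l\,B + \cos l\,(\cos\alpha\,T + \sin\alpha\,N)$, a unit vector lying in $T_A S^2$ that points along the bicycle frame. Differentiating $A$ with respect to $s$ and substituting the Frenet equations expresses $\dot A = dA/ds$ in the $\{B,T,N\}$ basis as an explicit function of $\alpha$, $\alpha'$, $\kappa$, and $l$.

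The bicycle constraint is precisely that the velocity $\dot A$ of the back wheel is parallel to the frame direction $w$. Both $\dot A$ and $w$ lie in the tangent plane $T_A S^2$, so I would impose this by requiring that $\dot A$ annihilate the unit vector $p = A \times w$ perpendicular to $w$ in that plane. A short computation with the relations $B \times T = N$, $T \times N = B$, $N \times B = T$ shows that $p = -\sin\alpha\,T + \cos\alpha\,N$, and the condition $\dot A \cdot p = 0$ then collapses to $\sin l\,(\alpha' + \kappa) = \cos l\,\sin\alpha$, which is exactly \eqref{sphdiffeq}. For the length relation I would instead compute $\dot A \cdot w$ directly; the terms carrying $\alpha' + \kappa$ cancel and leave $\dot A \cdot w = \cos\alpha$. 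Since the constraint forces $\dot A \parallel w$ and $w$ is a unit vector, $\lvert \dot A \rvert = \lvert \cos\alpha \rvert$; and because $t$ is the arc length of $\gamma$, we have $\lvert dt/ds \rvert = \lvert \dot A \rvert = \lvert \cos\alpha \rvert$, which is \eqref{sphdifflength}.

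The main obstacle I anticipate is bookkeeping rather than conceptual: fixing consistent sign and orientation conventions for the frame, for the angle $\alpha$, and for the co-orientation, so that the constraint comes out as parallelism to $w$ (rather than to $-w$) with the stated signs. The one genuinely clarifying step is recognizing that $A \times w$ simplifies to $-\sin\alpha\,T + \cos\alpha\,N$, independently of $l$; this is what makes the parallelism condition reduce so cleanly to a scalar ODE, and it simultaneously produces the factor $\cos\alpha$ in the length formula.
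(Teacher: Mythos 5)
Your proposal is correct and is essentially the paper's own proof: your $B$, $T$, $N$, $u$, and $w$ are the paper's $\Gamma$, $\Gamma'$, $\Gamma\times\Gamma'$, $v$, and $\tilde{v}$, and the computation of $\dot{A}=\gamma'$ via the spherical Frenet equations is identical. The only differences are mild streamlinings within the same approach: where the paper sets the full cross product $\tilde{v}\times\gamma'=0$ and argues that all three coefficients vanish (with a short case analysis to rule out the degenerate system), you project onto the single normal direction $p=A\times w$ in $T_A S^2$, which yields \eqref{sphdiffeq} in one scalar equation; and where the paper obtains \eqref{sphdifflength} by taking $\lvert\gamma'\rvert$ and substituting the ODE, your identity $\dot{A}\cdot w=\cos\alpha$ exhibits the cancellation directly.
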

\begin{proof}
	We will denote by $v$ the tangent vector to $BA$ at $\Gamma(s)$ and by $\tilde{v}$ the tangent vector to $BA$ at $\gamma(s)$, as in Figure \ref{fig:notation}. 
	\begin{figure}[ht]
		\centering
			\includegraphics[scale=1]{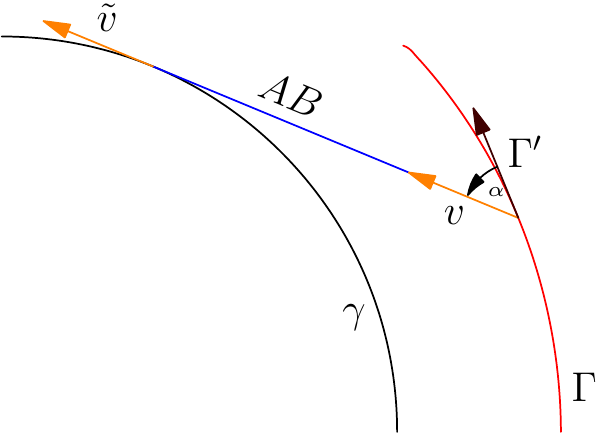}
		\caption{Notation of Theorem \ref{thm:sphdiffeq}}
		\label{fig:notation}
	\end{figure}
	First we write down an expression for $v$ and use it to obtain an expression for $\gamma$:
	\[v=\sin(\alpha)(\Gamma(s)\times\Gamma'(s))+\cos(\alpha)\Gamma'(s).\]
	\[\gamma(s)=\cos(l)\Gamma(s)+\sin(l)v\]
	Note that $\Gamma''(s)=-\Gamma(s)+\kappa(\Gamma(s)\times\Gamma'(s))$.
	We can use this to find the derivative $\gamma'=\dfrac{d\gamma}{ds}$:
	\begin{eqnarray*}
	\gamma'(s) & = &(\Gamma(s)\times\Gamma'(s)) (\sin(l)\cos(\alpha)(\kappa+\alpha')) +\\
	 &  &(\Gamma'(s))(\cos(l)-\sin(l)\sin(\alpha)(\kappa+\alpha'))+\\
	 &  &(\Gamma(s))(-\sin(l)\cos(\alpha)).\end{eqnarray*}
	We can also obtain an expression for $\tilde{v}$:

	\[\tilde{v}=\cos(l)v-\sin(l)\Gamma(s).\]
	Now, $\tilde{v}$ and $\gamma'(s)$ are parallel, so their cross product must be 0:
	\[0=\tilde{v}\times\gamma'(s)=A(\Gamma(s))+B(\Gamma'(s))+C(\Gamma(s)\times\Gamma'(s))\]
	Since these three vectors are orthogonal and non-zero, all of A, B,
	and C must be equal to 0. This gives us that either
	\[\frac{d\alpha}{ds}+\kappa=\cot(l)\sin(\alpha)\]
	or
	\[\sin(l)\cos(\alpha) = 0 \qquad \cos(l) = 0 \qquad \sin(\alpha) = 0. \]
	This system cannot be satisfied for any pair of $\alpha$ and
	$l$, however, so the 1st claim of the theorem is proven. 

	To establish the 2nd claim, we write the expression for $\gamma'(s)$ as above and take its
	magnitude. Using the differential equation that we have just derived, we substitute and simplify, which yields the result. 
\end{proof}

\begin{corollary}
	Parameterizing $\alpha$ by $t$, we have that 
	\begin{equation}\label{smallcurv}k=\frac{\tan(\alpha(t))}{\sin(l)}.\end{equation}
\end{corollary}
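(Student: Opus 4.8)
The plan is to compute the geodesic curvature of $\gamma$ directly from the explicit parametrization already in hand, reusing the machinery of the preceding proof. Recall that in proving Theorem \ref{thm:sphdiffeq} we obtained $\gamma'(s)$ expressed in the orthonormal frame $e_1 = \Gamma$, $e_2 = \Gamma'$, $e_3 = \Gamma\times\Gamma'$. The first step is to substitute the differential equation \eqref{sphdiffeq}, in the form $\kappa + \alpha' = \cot(l)\sin(\alpha)$, into that expression. As in the derivation of \eqref{sphdifflength}, this collapses the three coefficients so that
\[ \gamma'(s) = \cos(\alpha)\,w, \qquad w = -\sin(l)\,e_1 + \cos(l)\cos(\alpha)\,e_2 + \cos(l)\sin(\alpha)\,e_3, \]
where a short check gives $|w| = 1$, so that (up to the sign of $\cos\alpha$) $w$ is the unit tangent $d\gamma/dt$ to the back-wheel curve.

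Next I would use the standard fact that for a unit-speed curve on $S^2$ the geodesic curvature is the triple product $k = \det\!\big(\gamma,\tfrac{d\gamma}{dt},\tfrac{d^2\gamma}{dt^2}\big) = \tfrac{d^2\gamma}{dt^2}\cdot(\gamma\times \tfrac{d\gamma}{dt})$. Converting the $t$-derivatives to $s$-derivatives through \eqref{sphdifflength}, namely $dt/ds = \cos(\alpha)$, reduces this to
\[ k = \frac{1}{\cos(\alpha)}\,\frac{dw}{ds}\cdot(\gamma\times w). \]
The engine for evaluating this is the system of structure equations for the moving frame, which follow from $\Gamma'' = -\Gamma + \kappa(\Gamma\times\Gamma')$:
\[ \dot e_1 = e_2, \qquad \dot e_2 = -e_1 + \kappa\,e_3, \qquad \dot e_3 = -\kappa\,e_2. \]
Differentiating $w$ with these relations and again invoking \eqref{sphdiffeq} to eliminate $\kappa + \alpha'$ expresses $\dot w$ purely in terms of $\alpha$ and $l$; likewise $\gamma = \cos(l)\,e_1 + \sin(l)\cos(\alpha)\,e_2 + \sin(l)\sin(\alpha)\,e_3$ gives $\gamma\times w = -\sin(\alpha)\,e_2 + \cos(\alpha)\,e_3$. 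Taking the dot product and collapsing with $\cos^2 l + \sin^2 l = 1$ and $\sin^2\alpha + \cos^2\alpha = 1$ should yield $\dot w\cdot(\gamma\times w) = \sin(\alpha)/\sin(l)$, whence $k = \tan(\alpha)/\sin(l)$, which is \eqref{smallcurv}.

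The main obstacle here is not conceptual but organizational: the computation of $\dot w\cdot(\gamma\times w)$ involves several cancellations, and the chief risk is mishandling the coefficients or applying the differential equation inconsistently across terms. Working entirely in the frame $\{e_1,e_2,e_3\}$, where both $\gamma$ and $w$ have clean coordinate expressions and the structure equations are explicit, keeps this bookkeeping under control. A minor point to flag is the sign of $\cos(\alpha)$ in the reparametrization, but since \eqref{smallcurv} and the target formula are both stated via $\tan(\alpha)$, the orientation convention is consistent and the single factor of $\cos(\alpha)$ in the denominator is exactly what remains after dividing out the arclength scaling.
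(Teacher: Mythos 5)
Your proposal is correct --- I checked the computation: substituting $\kappa+\alpha'=\cot(l)\sin\alpha$ into the expression for $\gamma'(s)$ from Theorem \ref{thm:sphdiffeq} does give $\gamma'(s)=\cos(\alpha)\,w$ with $w$ as you state and $|w|=1$; one finds $\gamma\times w=-\sin\alpha\,e_2+\cos\alpha\,e_3$, and differentiating $w$ with your structure equations and using the ODE again yields $\dot w\cdot(\gamma\times w)=\bigl(\sin\alpha\,(\sin^2 l+\cos^2 l\sin^2\alpha)+\cos^2 l\cos^2\alpha\sin\alpha\bigr)/\sin l=\sin\alpha/\sin l$, whence $k=\tan\alpha/\sin l$ after dividing by the arclength factor $\cos\alpha$. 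However, your route is genuinely different from the paper's. The paper reverses the roles of the two wheels: it parameterizes by $t$, works in the back-wheel frame $\{\gamma,\gamma',\gamma\times\gamma'\}$, writes $\Gamma=\cos(l)\gamma+\sigma\sin(l)\gamma'$ and $v=\sin(l)\gamma-\sigma\cos(l)\gamma'$, and computes the single cross product $\Gamma'\times v$ in two ways --- once component-wise (using $\gamma''=-\gamma+k(\gamma\times\gamma')$) and once as $\sin(\alpha)\,\|\Gamma'\|\,\|v\|\,\Gamma$ --- equating to get the pointwise algebraic relation $\sin(l)k=\sin(\alpha)\sqrt{1+\sin^2(l)k^2}$, which solves to \eqref{smallcurv}. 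Notably, the paper's proof never invokes the differential equation \eqref{sphdiffeq}: the curvature relation falls out of the tangency constraint alone, at first order in $\gamma$, with no second differentiation. Your proof instead stays in the front-wheel frame, leans on \eqref{sphdiffeq} twice, and computes $k$ from its definition as a triple product; this buys you direct reuse of the formulas already derived in Theorem \ref{thm:sphdiffeq} and a mechanical (if heavier) calculation, at the cost of an extra differentiation and the coefficient bookkeeping you flag. Both arguments share the same mild sign looseness --- the paper's final step from $\sin(l)k=\sin\alpha\sqrt{1+\sin^2(l)k^2}$ to $\sin(l)k=\tan\alpha$ implicitly takes $\cos\alpha>0$, exactly the orientation convention you note --- so your treatment of that point is at the same level of rigor as the original.
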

\begin{proof}
	We parameterize everything below by $t$. Let $\sigma$ denote the orientation of the rear wheel ($+1$ if the
	same direction as $\gamma'$ and $-1$ otherwise). We have that
	\begin{align*}
		\Gamma&=\cos(l)\gamma+\sigma\sin(l)\gamma' \\
		\Gamma'&=-\sigma\sin(l)(\gamma)+\cos(l)(\gamma')+\sigma\sin(l)k(\gamma\times\gamma') \\
		v&=\sin(l)\gamma-\sigma\cos(l)\gamma'.
	\end{align*}
	And so 
	\[ \Gamma'\times v=(\cos(l)\sin(l)k)\gamma+(\sigma\sin^{2}(l)k)\gamma'\]
	We can also write
	\[
	\Gamma'\times v=(\sin(\alpha)||v||||\Gamma'||)\Gamma=\left(\sin(\alpha)\sqrt{1+\sin^2(l)k^2}\right)\Gamma\]
	Plugging in our formula for $\Gamma$ from above and equating the
	two expressions we find that
	\[\sin(l)k=\sin(\alpha)\sqrt{1+\sin^2(l)k^2}\]
	Substituting we see this is solved for 
	\[\sin(l)k=\tan(\alpha).\]
\end{proof}
		The above results on the sphere have parallels in the hyperbolic plane, which we prove below. Note that they are identical to the spherical results up to the replacement of trigonometric functions with hyperbolic trigonometric functions.
		\begin{theorem}[$H^2$]
The condition $T_{l}(\gamma)=\Gamma$ is equivalent to the differential
equation on the function $\alpha(s)$:

\begin{equation}\label{hypdiffeq}
\frac{d\alpha(s)}{ds}+\kappa(s)=\coth(l)\sin(\alpha).\end{equation}
As well,
\[ \left| \frac{dt}{ds} \right| = \left| \cos(\alpha) \right|.\]
\end{theorem}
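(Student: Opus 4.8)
The plan is to mirror the spherical proof of Theorem~\ref{thm:sphdiffeq} essentially verbatim, replacing the Euclidean structure on $\rr^3$ with the Minkowski structure underlying the hyperboloid model. Write $\langle u,v\rangle = u_1v_1+u_2v_2-u_3v_3$ for the indefinite form coming from $ds^2=dx^2+dy^2-dz^2$, and let $\boxtimes$ denote the associated Lorentzian cross product, characterized by $\langle u\boxtimes v,\,w\rangle=\det(u,v,w)$. For a unit-speed front $\Gamma$ on the hyperboloid one has $\langle\Gamma,\Gamma\rangle=-1$, and differentiating gives $\langle\Gamma,\Gamma'\rangle=0$ and $\langle\Gamma',\Gamma'\rangle=1$, so $\{\Gamma,\Gamma',\Gamma\boxtimes\Gamma'\}$ is an orthogonal frame (the first vector timelike, the latter two spacelike and unit). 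Differentiating these relations once more yields the hyperbolic Frenet formula $\Gamma''=\Gamma+\kappa(\Gamma\boxtimes\Gamma')$, which is exactly the spherical identity $\Gamma''=-\Gamma+\kappa(\Gamma\times\Gamma')$ with the sign on the position term reversed and $\times$ replaced by $\boxtimes$.

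First I would record the geometric dictionary between the two settings. A geodesic of length $l$ leaving $\Gamma(s)$ in the unit direction $v$ terminates at $\gamma(s)=\cosh(l)\Gamma(s)+\sinh(l)v$, and the tangent vector $v$ to $BA$ at $\Gamma(s)$ is $v=\sin(\alpha)(\Gamma\boxtimes\Gamma')+\cos(\alpha)\Gamma'$, since $\alpha$ is a genuine Riemannian angle measured in the positive-definite tangent plane spanned by $\Gamma'$ and $\Gamma\boxtimes\Gamma'$. Differentiating the geodesic endpoint with respect to $l$ gives the far-end tangent vector $\tilde v=\cosh(l)v+\sinh(l)\Gamma$; the sign here differs from the spherical $\tilde v=\cos(l)v-\sin(l)\Gamma$ precisely because $\frac{d}{dl}\cosh l=\sinh l>0$.

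Next I would compute $\gamma'(s)$ by the product rule, using the Frenet formula and the companion identity $(\Gamma\boxtimes\Gamma')'=\Gamma\boxtimes\Gamma''=-\kappa\Gamma'$ (the hyperbolic analog of $(\Gamma\times\Gamma')'=-\kappa\Gamma'$, obtained from the Lorentzian version of the vector triple-product identity $u\boxtimes(v\boxtimes w)=\langle u,v\rangle w-\langle u,w\rangle v$). Collecting terms in the frame $\{\Gamma,\Gamma',\Gamma\boxtimes\Gamma'\}$ yields an expression formally identical to the spherical $\gamma'(s)$, but with every $\cos(l)$, $\sin(l)$ promoted to $\cosh(l)$, $\sinh(l)$. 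Imposing that $\tilde v$ and $\gamma'(s)$ are parallel --- now via $\tilde v\boxtimes\gamma'(s)=0$, which detects parallelism for the non-null vectors in play --- and setting each of the three frame coefficients to zero produces either $\frac{d\alpha}{ds}+\kappa=\coth(l)\sin(\alpha)$ or the degenerate system $\sinh(l)\cos(\alpha)=0$, $\cosh(l)=0$, $\sin(\alpha)=0$. Since $\cosh(l)$ never vanishes, the degenerate alternative is impossible, which establishes \eqref{hypdiffeq}. For the length relation I would take the Riemannian norm $\|\gamma'(s)\|$ in the induced metric, substitute \eqref{hypdiffeq}, and simplify exactly as in the spherical case to obtain $\left|\frac{dt}{ds}\right|=\left|\cos(\alpha)\right|$.

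The only genuinely new content, and hence the main obstacle, is verifying the Minkowski linear algebra with the correct signs: that $\Gamma''=\Gamma+\kappa(\Gamma\boxtimes\Gamma')$, that $\boxtimes$ is orthogonal to its factors and detects parallelism of non-null vectors, and that the triple-product expansion carries the signs claimed above. Once this dictionary is pinned down, the hyperbolic computation is word-for-word the spherical one under the substitution $\cos\mapsto\cosh$, $\sin\mapsto\sinh$, $\cot\mapsto\coth$, so I would present the algebra only schematically and reserve the care for the sign bookkeeping.
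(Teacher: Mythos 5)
Your proposal is correct and takes essentially the same approach as the paper: the paper likewise writes $v$ in the frame at $\Gamma(s)$, sets $\gamma = \cosh(l)\Gamma + \sinh(l)v$, computes $\gamma'$ via the hyperbolic Frenet--Serret formulas $\Gamma'' = \Gamma + \kappa\Gamma'_\perp$, $(\Gamma'_\perp)' = -\kappa\Gamma'$, transports to get $\tilde v = \sinh(l)\Gamma + \cosh(l)v$, imposes parallelism by a vanishing cross product, and rules out the degenerate branch because $\cosh(l)$ never vanishes. The only differences are notational --- the paper writes $\Gamma'_\perp$ for your $\Gamma\boxtimes\Gamma'$ and checks parallelism with the Euclidean wedge rather than the Lorentz product, which is equivalent since the two products differ by the invertible map negating the third coordinate --- and your explicit derivation of the Minkowski triple-product identities supplies sign bookkeeping the paper leaves implicit.
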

\begin{proof}
	First we write down an expression for $v$ and use it to obtain an expression for $\gamma$:
	\[ v = \cos(\alpha) \Gamma' + \sin(\alpha)\Gamma'_{\perp}.\] 
	We have that $v$ is a unit tangent vector in the direction of the geodesic connecting $\gamma$ and $\Gamma$, so we can use the standard parameterization of geodesics in the hyperboloid model to obtain an expression for $\gamma$:
	\[ \gamma = \cosh(l)\Gamma + \sinh(l) v .\]
	By the hyperbolic Frenet-Serret formulas, we have that 
	\[ \Gamma'' = \Gamma + \kappa \Gamma'_\perp \quad \text{and} \quad (\Gamma'_\perp)' = -\kappa \Gamma'.\]
	Using this, we can find an expression for $\gamma' = \dfrac{d\gamma}{ds}:$
	\begin{eqnarray*}
		\gamma' & = &\Gamma(\sinh(l)\cos(\alpha)) \\
		& & \Gamma'(\cosh(l) - \sinh(l)\sin(\alpha)(\frac{d\alpha}{ds} + \kappa))\\
		& & \Gamma'_\perp (\sinh(l)\cos(\alpha)(\frac{d\alpha}{ds} + \kappa)).
	\end{eqnarray*}
	As in the spherical case, we can find $\tilde{v}$ by transporting $v$ along the geodesic $AB$:
	\[ \tilde{v} = \sinh(l)\Gamma + \cosh(l) v .\]
	We must have that $\tilde{v}$ and $\gamma'$ must be parallel (in the Euclidean sense), and so we can equate their Euclidean cross product to 0, giving us
	\[ 0 = \gamma' \wedge \tilde{v} = A(\Gamma' \wedge \Gamma) + B(\Gamma'_\perp \wedge \Gamma) + C(\Gamma' \wedge \Gamma'_\perp) .\]
	We have that $\Gamma, \Gamma'$ and $\Gamma'_\perp$ are linearly independent, so $A,B,C$ must all be 0. This gives us that either
	\[ \frac{d\alpha}{ds} + \kappa = \coth(l) \sin(\alpha)\]
	or 
	\[\sinh(l) = 0 \qquad \cos(\alpha)\sinh(l) =0 \qquad \cosh(l) = 0. \]
	This system cannot be satisfied for any pair of $\alpha$ and
	$l$, however, so we are done.
\end{proof}

\begin{corollary}[$H^2$]\label{cor:hypercurvature}
	Parameterizing $\alpha$ by $t$, we have that 
	\begin{equation}\label{hypercurvature}k=\frac{\tan(\alpha(t))}{\sinh(l)}.\end{equation}
\end{corollary}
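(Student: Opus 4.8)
The plan is to mirror the spherical corollary, reversing the roles of the two wheels so that the rear curve $\gamma$ becomes the base curve. I would parameterize everything by $t$, the arc length of $\gamma$, and let $\sigma=\pm 1$ record the orientation of the rear wheel relative to $\gamma'$. Since the frame $AB$ is tangent to $\gamma'$ and has geodesic length $l$, the front wheel is obtained by flowing a distance $l$ along this tangent geodesic, which in the hyperboloid model gives
\[ \Gamma = \cosh(l)\gamma + \sigma\sinh(l)\gamma'. \]
Differentiating and substituting the hyperbolic Frenet--Serret relation $\gamma'' = \gamma + k\,\gamma'_\perp$ (already used in the preceding theorem) yields
\[ \Gamma' = \sigma\sinh(l)\gamma + \cosh(l)\gamma' + \sigma\sinh(l)k\,\gamma'_\perp, \]
and solving $\gamma = \cosh(l)\Gamma + \sinh(l)v$ for the unit tangent $v$ to $BA$ at $\Gamma$ gives $v = -\sinh(l)\gamma - \sigma\cosh(l)\gamma'$. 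A short computation with the Minkowski form confirms that $v$ has Lorentzian norm $1$ and that $\Gamma'$ has Lorentzian norm $\sqrt{1+\sinh^2(l)k^2}$, exactly the hyperbolic analogue of the spherical factor $\sqrt{1+\sin^2(l)k^2}$.

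The heart of the argument is to compute the quantity that plays the role of $\Gamma'\times v$ in two different ways. Because the induced metric on the hyperboloid is Lorentzian, the relevant operation is the Minkowski cross product $\boxtimes$, defined by $\langle a\boxtimes b,c\rangle = \det(a,b,c)$ for the Minkowski form $\langle\cdot,\cdot\rangle$; this sends a pair of tangent vectors at $\Gamma$ to a multiple of $\Gamma$, just as the Euclidean cross product sends tangent vectors on the sphere to a multiple of the position vector. On one hand I would expand $\Gamma'\boxtimes v$ directly in the frame $\{\gamma,\gamma',\gamma'_\perp\}$, obtaining a vector lying in the span of $\gamma$ and $\gamma'$ whose coefficients are (up to sign) $\sinh(l)\cosh(l)k$ and $\sigma\sinh^2(l)k$. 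On the other hand, since the tangent plane at $\Gamma$ is spacelike, the vectors $\Gamma'$ and $v$ subtend an ordinary angle $\alpha$ there, and the Minkowski cross product satisfies
\[ \Gamma'\boxtimes v = \pm\bigl(\sin(\alpha)\,\|v\|\,\|\Gamma'\|\bigr)\Gamma. \]

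Equating the two expressions and substituting $\Gamma = \cosh(l)\gamma + \sigma\sinh(l)\gamma'$ lets me match the coefficients of $\gamma$ and $\gamma'$, which collapses to the single relation
\[ \sinh(l)k = \sin(\alpha)\sqrt{1+\sinh^2(l)k^2}. \]
Squaring and solving for $k$ then produces $\sinh(l)k = \tan(\alpha)$, which is the claimed formula. As a cleaner shortcut I would also note that computing $\langle\Gamma',v\rangle = -\sigma$ directly gives $\cos(\alpha) = -\sigma/\sqrt{1+\sinh^2(l)k^2}$, from which $\tan^2(\alpha) = \sinh^2(l)k^2$ follows at once.

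I expect the main obstacle to be the sign and normalization bookkeeping for the Minkowski cross product: unlike the Euclidean case, the cross product of two spacelike tangent vectors is timelike, so the identity $\Gamma'\boxtimes v = \pm(\sin(\alpha)\,\|v\|\,\|\Gamma'\|)\Gamma$ carries a sign that must be pinned down via orientation, and one must check that the ordinary (elliptic) angle $\alpha$ is the correct one precisely because the tangent plane is positive definite. Tracking the orientation sign $\sigma$ through these steps is exactly what fixes the sign of $\tan(\alpha)$, and the inner-product computation above provides a convenient consistency check on that bookkeeping.
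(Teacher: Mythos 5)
Your proposal follows essentially the same route as the paper: the same expressions $\Gamma = \cosh(l)\gamma + \sigma\sinh(l)\gamma'$, $\Gamma' = \sigma\sinh(l)\gamma + \cosh(l)\gamma' + \sigma\sinh(l)k\,\gamma'_\perp$, and $v = -\sinh(l)\gamma - \sigma\cosh(l)\gamma'$, followed by computing the Lorentz cross product $\Gamma'\wedge v$ in two ways (your $\boxtimes$ is exactly the paper's $\wedge$) and equating to get $\sin(\alpha)\sqrt{1+\sinh^2(l)k^2} = \sinh(l)k$, hence $\sinh(l)k = \tan(\alpha)$. Your added inner-product shortcut via $\langle\Gamma',v\rangle = -\sigma$ is a valid consistency check (though it pins down $\tan(\alpha)$ only up to sign, as you note), and your attention to the timelike character of the cross product is if anything more careful than the paper's own sign bookkeeping.
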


\begin{proof}
We parameterize everything below by $t$. Let $\sigma$ denote the orientation of the rear wheel ($+1$ if the
same direction as $\gamma'$and $-1$ otherwise). Note
\[
\Gamma=\cosh(l)\gamma+\sigma\sinh(l)\gamma'\]
Now, by $\wedge$ denote the Lorentz cross product given by 
\[
x\wedge y=\left|\begin{array}{ccc}
i & j & -k\\
x_{1} & x_{2} & x_{3}\\
y_{1} & y_{2} & y_{3}\end{array}\right|.\]
After substituting $\gamma''=\gamma+k(\gamma\wedge\gamma')$ we get
\[
\Gamma'=\sigma\sinh(l)(\gamma)+\cosh(l)(\gamma')+\sigma\sinh(l)k(\gamma\wedge\gamma').\]
Now, we can find $v$ by parameterizing the arc of the bicycle starting at $\gamma$, taking the derivative at
$l$, and flipping it to get:
\[
v=-(\sinh(l)\gamma+\sigma\cosh(l)\gamma')=-\sinh(l)\gamma-\sigma\cosh(l)\gamma'.\]
Then \[
\Gamma'\wedge v=-\sinh^{2}(l)k(\gamma\wedge\gamma')\wedge\gamma-\cosh(l)\sinh(l)k(\gamma\wedge\gamma')\wedge\gamma'.\]
However, it can be verified that $(\gamma\wedge\gamma')\wedge\gamma=\gamma'$
and $(\gamma\wedge\gamma')\wedge\gamma'=\gamma$, so
\[
\Gamma'\wedge v=-\sinh^{2}(l)k\gamma'-\cosh(l)\sinh(l)k\gamma.\]
We also have that $\Gamma'\wedge v=\sin(\alpha)(||\Gamma'||_M)(||v||_M)(\Gamma),$
where $||\cdot||_M$ is the Minkowski norm. This gives us:
\[
\Gamma'\wedge v=\sin(\alpha)\sqrt{1+\sinh^{2}(l)k^{2}}(\Gamma).\]
Substituting in for $-\Gamma$ and equating our two expressions we find 
\[
\sin(\alpha)\sqrt{1+\sinh^{2}(l)k^{2}}=\sinh(l)k.\]
Which gives us that $\sinh(l)k=\tan(\alpha)$.
\end{proof}
	\subsection{The Monodromy Map}
		Having established differential equations that describe the bicycle motion on the sphere and in the hyperbolic plane, we are now in a position to talk about the monodromy map associated to a front wheel curve $\Gamma$. 

Note that in the euclidean plane we have the ability to identify circles centered at different points of $\Gamma$ via parallel translation. On the sphere or in the hyperbolic plane, this is not possible. Instead, we identify circles at different points of $\Gamma$ so that the velocity vector $\Gamma '$ makes an angle of 0 degrees. With this convention, we can define the monodromy map as above, and we see that the monodromy map is always a M\"obius transformation, but let us first make sense of that statement. 
\begin{definition}
	A \emph{M\"obius transformation} is a fractional linear map $M: \mathbb{C} \rightarrow \mathbb{C}$, of the form
	$$M(z) = \frac{az + b}{cz + d}$$ for some $a,b,c,d \in \mathbb{C}$. In matrix form:
	\[ M = \begin{bmatrix}
		a & b \\
		c & d
	\end{bmatrix} .
	\]
\end{definition} 
	 We have that the orientation-preserving isometries of $H^2$ in the upper half-plane model are those M\"obius transformations with real coefficients and such that $ad-bc = 1$. These correspond to the group $SL(2,\mathbb{R})$ of $2 \times 2$ real matrices with determinant 1. The isometries extend to the real line and the point at infinity, which can be identified with $\mathbb{RP}^1$ or $S^1$. This gives us a way for M\"obius transformations to act on $S^1$, which is the domain and range of our monodromy map. 
\begin{theorem}\label{theorem:monodromy}
	Let $\Gamma$ be a front wheel bicycle curve. Then the monodromy map $M$ associated to $\Gamma$ is a M\"obius transformation. 
\end{theorem}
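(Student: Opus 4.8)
```latex
\textbf{Proof proposal.}

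The plan is to show that the monodromy map arises as the time-$S$ flow of a one-parameter family of M\"obius transformations, where $S = L(\Gamma)$ is the total length of the front wheel curve. The key observation is that the governing differential equation \eqref{hypdiffeq} (respectively \eqref{sphdiffeq}) controls how the angle $\alpha(s)$ evolves, and hence how the back wheel position evolves, as the front wheel traverses $\Gamma$. The back wheel position can be encoded by a single real parameter: for each $s$, the angle $\alpha(s)$ (or equivalently a point on the circle $S^1$ of possible back-wheel directions relative to $\Gamma'(s)$) determines the state of the system. So the monodromy map is precisely the map sending the initial value $\alpha(0)$ to the terminal value $\alpha(S)$ under the flow of the ODE, identified via the coordinate described just before the theorem (circles at different points of $\Gamma$ identified so that $\Gamma'$ makes angle $0$).

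First I would rewrite the angle evolution equation \eqref{hypdiffeq} in terms of a suitable coordinate on $S^1 \cong \mathbb{RP}^1$. The right-hand side $\coth(l)\sin(\alpha)$ together with the curvature term $\kappa(s)$ gives a nonautonomous first-order ODE of \emph{Riccati type} once we pass to a projective coordinate. Concretely, I would substitute a variable such as $u = \tan(\alpha/2)$ (or another projective chart); under this substitution a term linear in $\sin\alpha$ and $\cos\alpha$ becomes a quadratic polynomial in $u$, so the equation takes the form $\frac{du}{ds} = a(s) + b(s)u + c(s)u^2$. The central classical fact I would invoke is that the flow of a Riccati equation is given by fractional linear transformations: if $\frac{du}{ds}$ is quadratic in $u$ with $s$-dependent coefficients, then the time-$s$ map $u(0) \mapsto u(s)$ is a M\"obius transformation, because the Riccati equation lifts to a \emph{linear} system on $\mathbb{C}^2$ (the projectivization of a linear flow). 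The monodromy, being the composite of these infinitesimal fractional linear maps from $s=0$ to $s=S$, is therefore itself a M\"obius transformation.

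The key steps, in order, are: (1) identify the state space $S^1$ of back-wheel positions with $\mathbb{RP}^1$ using the angle $\alpha$ and the identification convention stated before the theorem; (2) show that under this identification the evolution of the state as $\Gamma$ is traversed is governed by \eqref{hypdiffeq} (or \eqref{sphdiffeq}); (3) transform this into Riccati form via a projective coordinate substitution; (4) lift the Riccati equation to a linear ODE $\dot{X} = M(s)X$ on $\mathbb{C}^2$, whose time-$S$ fundamental solution matrix is an element of the matrix group defining M\"obius transformations; and (5) conclude that the monodromy map, being the projectivization of this fundamental matrix, is a M\"obius transformation, with the $SL(2,\mathbb{R})$ structure matching the isometry description given above.

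The main obstacle I expect is step (2)--(3): carefully justifying that the full monodromy — which a priori depends on the global geometry and the matching of circles at different basepoints of $\Gamma$ — reduces exactly to the flow of the \emph{single} scalar ODE \eqref{hypdiffeq}, with no additional holonomy contribution from the identification of tangent circles along $\Gamma$. One must verify that the chosen identification (aligning $\Gamma'$ at angle $0$) is precisely the one that makes the state evolution autonomous in the right sense, so that the back-wheel map is genuinely the time-$S$ map of the Riccati flow and nothing more. Once that bookkeeping is settled, the passage to Riccati form and the linearization are standard and the M\"obius conclusion follows immediately.
```
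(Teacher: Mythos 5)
Your proposal is correct and takes essentially the same route as the paper: the paper likewise passes to the projective coordinate $y=\tan\frac{\alpha}{2}$, rewrites \eqref{sphdiffeq} as the Riccati equation $y'=-\frac{\kappa}{2}y^{2}+\cot(l)\,y-\frac{\kappa}{2}$, and concludes that the flow is by M\"obius transformations because the vector fields $y^{2}\frac{d}{dy}$, $y\frac{d}{dy}$, $\frac{d}{dy}$ generate the Lie algebra of $SL(2)$ acting on $\mathbb{RP}^{1}$. Your lift of the Riccati equation to a linear system on $\mathbb{C}^{2}$ followed by projectivization is just an explicit rendering of that same $SL(2)$ argument, so the two proofs coincide in substance.
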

\begin{proof}
	First move from the coordinate $\alpha$ to the projective coordinate $y=\tan \frac{\alpha}{2}$. We can then rewrite the differential equation \eqref{sphdiffeq} as \[y'= -\dfrac{\kappa}{2}y^{2}  + \cot(l) y - \dfrac{\kappa}{2}.\]
	We then have that, treating $y'$ as a vector field and with $\dfrac{d}{dy}$ being a unit vector on $\mathbb{RP}^1$,
	\[y'(s)= -\dfrac{\kappa}{2}(s)y^{2}\dfrac{d}{dy}(y) + \cot(l) y\dfrac{d}{dy}(y) - \dfrac{\kappa}{2}(s)\dfrac{d}{dy}(y),\]
	so that the vector field $y'$ is a combination of the vector fields $y^2\dfrac{d}{dy}$, $y\dfrac{d}{dy}$, and $\dfrac{d}{dy}$ with $s$-dependent coefficients. These vector fields generate the Lie algebra of $SL(2)$, which acts on $\mathbb{RP}^1$ via M\"obius transformations. So $y$ is a transformation whose infinitesimal action is the same of that of a M\"obius transformation, implying that the monodromy is in fact a M\"obius transformation.   
\end{proof}
The M\"obius transformations that we are dealing with are identified with isometries of the hyperbolic plane, which come in 3 types: elliptic, parabolic, and hyperbolic. These transformations have 0, 1, and 2 fixed points on the circle at infinity, respectively. Note that if the front wheel curve $\Gamma$ is closed, a fixed point of its associated monodromy map corresponds to a \emph{closed} rear wheel curve.
\begin{theorem}[Both]\label{smallhyperbolic}
	For sufficiently small values of $l$, the monodromy map is hyperbolic. 
\end{theorem}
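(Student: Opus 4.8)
The plan is to realize the monodromy as the time-$S$ map of a flow on $\mathbb{RP}^1$ and to show that, as $l\to 0$, this flow traps a proper arc strictly inside itself, which forces the M\"obius transformation to be hyperbolic. Concretely, I would work in the projective coordinate $y=\tan(\alpha/2)$ used in the proof of Theorem~\ref{theorem:monodromy}, in which \eqref{sphdiffeq} becomes the Riccati equation
\[
 y' = -\tfrac{\kappa}{2}\,y^2 + \cot(l)\,y - \tfrac{\kappa}{2}
\]
(with $\coth l$ in place of $\cot l$ in $H^2$). Since $\Gamma$ is closed, $\kappa(s)$ is $S$-periodic, so the monodromy $M$ is exactly the time-$S$ flow of this equation: an orientation-preserving homeomorphism of $\mathbb{RP}^1$ which is M\"obius by Theorem~\ref{theorem:monodromy}. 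The observation driving everything is that $\cot l\to\infty$ (resp.\ $\coth l\to\infty$) as $l\to 0^+$, so the equation is dominated by the linear term $\cot(l)\,y$, whose flow $\mathrm{diag}\!\left(e^{\cot(l)s/2},e^{-\cot(l)s/2}\right)$ is strongly hyperbolic with fixed points $0$ and $\infty$.

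Next I would build a trapping arc around the would-be attracting fixed point $y=\infty$. In the chart $w=1/y$ near $\infty$ the equation becomes
\[
 w' = \tfrac{\kappa}{2} - \cot(l)\,w + \tfrac{\kappa}{2}\,w^2 = \tfrac{\kappa}{2}(1+w^2) - \cot(l)\,w .
\]
Let $K=\max_{s}|\kappa(s)|<\infty$. On $w=\rho$ one has $w'\le \tfrac{K}{2}(1+\rho^2)-\cot(l)\rho$, and on $w=-\rho$ one has $w'\ge \cot(l)\rho-\tfrac{K}{2}(1+\rho^2)$; hence as soon as $\cot(l)\rho>\tfrac{K}{2}(1+\rho^2)$ the field points strictly inward at both endpoints of $[-\rho,\rho]$ for \emph{every} $s$. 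Taking $\rho=1$, this holds whenever $\cot(l)>K$, i.e.\ for all sufficiently small $l$. It follows that the time-$S$ flow carries the closed interval $\{|w|\le 1\}$ into its open interior, that is, $M$ maps the proper closed arc $J=\{|y|\ge 1\}\cup\{\infty\}\subset\mathbb{RP}^1$ into $\operatorname{int}(J)$. The $H^2$ case is verbatim with $\coth l$ replacing $\cot l$.

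Finally I would convert the trapping property into hyperbolicity. If a nontrivial orientation-preserving M\"obius homeomorphism $M$ sends a proper closed arc $J$ into $\operatorname{int}(J)$, then $M|_J$ has a fixed point $p\in\operatorname{int}(J)$ by the intermediate value theorem, while $M^{-1}$ sends the complementary closed arc $J^c$ into its own interior and so has a fixed point $q\in\operatorname{int}(J^c)$; thus $M$ has two distinct fixed points on $\mathbb{RP}^1$. A M\"obius transformation with exactly two fixed points on the circle at infinity is hyperbolic, so $M$ is hyperbolic, as claimed.

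The delicate point is not the large-parameter heuristic but the fact that the Riccati field is \emph{non-autonomous}: the instantaneous equilibria $y_{\pm}(s)=\bigl(\cot(l)\pm\sqrt{\cot^2 l-\kappa^2}\bigr)/\kappa$ move with $s$, so one cannot simply invoke a fixed point of a frozen system. The crux is therefore to produce a single trapping arc whose boundary is inward-pointing \emph{uniformly} in $s\in[0,S]$; passing to the chart $w=1/y$ is exactly what makes this possible, since the attracting equilibrium $y_{+}\sim 2\cot(l)/\kappa\to\infty$ that escapes to infinity in the $y$-coordinate becomes a genuine interior equilibrium near $w=0$ that a fixed interval can capture. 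Once the uniform trapping arc is in hand, the passage from ``traps an arc'' to ``hyperbolic'' is soft.
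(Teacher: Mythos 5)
Your proposal is correct, and it is best described as a rigorous completion of the paper's argument rather than the same proof: the paper's proof is a three-sentence perturbation sketch --- take $l \to 0$ in \eqref{sphdiffeq} (resp.\ \eqref{hypdiffeq}), note the limiting equation $\sin(\alpha)=0$ has the two solutions $\alpha = 0,\pi$, and assert that ``for small enough values of $l$, these solutions will survive,'' so the monodromy has two fixed points. That survival claim is precisely the delicate point you flag: the Riccati field is non-autonomous, so persistence of the frozen equilibria is not automatic, and the paper offers no argument for it. Your trapping-arc construction supplies exactly this missing step: passing to $w = 1/y$ so that the attracting equilibrium (at $\alpha=\pi$, i.e.\ $y=\infty$) becomes an interior equilibrium near $w=0$, checking that the field points strictly inward on $\{|w|\le 1\}$ uniformly in $s$ once $\cot l > \max_s|\kappa|$ (resp.\ $\coth l > \max_s|\kappa|$), and then extracting fixed points of $M$ in $\operatorname{int}(J)$ and of $M^{-1}$ in the interior of the complementary arc via the intermediate value theorem --- these lie in disjoint open arcs, so they are distinct, and $M\neq\mathrm{id}$ since $M(J)\subset\operatorname{int}(J)$, whence $M$ is hyperbolic. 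Note that your two fixed points sit near $\alpha=\pi$ and $\alpha=0$ respectively, matching the paper's heuristic exactly. What your route buys beyond rigor is a quantitative threshold: hyperbolicity holds for all $l$ with $\cot l$ (resp.\ $\coth l$) exceeding $\max_s|\kappa(s)|$, not merely for an unspecified ``sufficiently small'' $l$; what the paper's sketch buys is brevity, at the cost of leaving the persistence argument --- the actual content of the theorem --- to the reader.
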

\begin{proof}
	Take the limit as $l \rightarrow 0$ in \eqref{sphdiffeq} (for $S^2$) or \eqref{hypdiffeq} (for $H^2$). We obtain $\sin(\alpha) = 0$, which has two solutions $\alpha = 0, \pi$. For small enough values of $l$, these solutions will survive, and so the monodromy map will have two fixed points corresponding to them.
\end{proof}
\begin{theorem}[Both]
	Let $M$ have a fixed point $\theta_0$ and let $\gamma$ be the closed curve of the rear wheel corresponding to $\theta_0$. Then we have 
	\begin{equation}\label{derivlength} M'(\theta_0) = e^{\text{-Length}(\gamma)}. \end{equation}
\end{theorem}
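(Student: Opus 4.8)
The plan is to realize $M$ as the time-$S$ return map of the flow generated by the monodromy equation and to linearize that flow along the periodic orbit picked out by $\theta_0$. Since the front wheel curve $\Gamma$ is closed of total arc length $S$, the differential equation \eqref{sphdiffeq} (resp. \eqref{hypdiffeq}), written as a vector field in $s$ on the circle of bicycle directions, has the property that flowing an initial angle $\alpha(0)$ from $s=0$ to $s=S$ produces exactly $M(\alpha(0))$; equivalently one may use the projective coordinate $y=\tan\tfrac{\alpha}{2}$ and the Riccati form from the proof of Theorem \ref{theorem:monodromy}. A value $\theta_0$ is fixed by $M$ precisely when the corresponding solution $\alpha_*(s)$ is periodic, and this periodic solution is exactly the one whose rear wheel traces the closed curve $\gamma$.

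First I would write down the variational equation along $\alpha_*$. For a scalar non-autonomous flow $\alpha' = F(s,\alpha)$, the derivative of the return map at a fixed point is the (coordinate-independent) multiplier
\[ M'(\theta_0) = \exp\!\left(\int_0^S \frac{\partial F}{\partial \alpha}\bigl(s,\alpha_*(s)\bigr)\,ds\right). \]
Reading $F$ off from \eqref{hypdiffeq} gives $F(s,\alpha) = \coth(l)\sin\alpha - \kappa(s)$, so $\partial F/\partial\alpha = \coth(l)\cos\alpha$ (and $\cot(l)\cos\alpha$ in the spherical case), since $\kappa(s)$ is the prescribed front wheel curvature and does not depend on the initial rear wheel position. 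This reduces the exponent to the explicit integral $\int_0^S \coth(l)\cos\alpha_*(s)\,ds$.

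The next step is to convert this into an arc length integral over $\gamma$. Equation \eqref{sphdifflength} gives $\lvert dt/ds\rvert = \lvert\cos\alpha\rvert$, so $\cos\alpha\,ds$ is, up to sign, the arc length element $dt$ of the rear track. Crucially, the sign of $\cos\alpha$ flips exactly at the cusps of $\gamma$, which occur where $\alpha=\pm\tfrac{\pi}{2}$; this is precisely the co-orientation sign convention built into the definition of the algebraic length $L(\gamma)$. Carrying out this substitution identifies the exponent with (the negative of) the length of $\gamma$, yielding the claimed formula \eqref{derivlength}.

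I expect the main obstacle to be the careful bookkeeping in this last identification: tracking the sign of $\cos\alpha$ through every cusp and verifying that the bicycle-length factor combines with the speed relation \eqref{sphdifflength} to reproduce the arc length element with no residual constant, so that the exponent is exactly $-\text{Length}(\gamma)$. A reassuring consistency check comes from the M\"obius picture: since $M$ is an isometry of $H^2$ and $\theta_0$ is a fixed point on $\partial H^2$, its multiplier must equal $e^{\mp d}$ where $d$ is the translation length of $M$ along its axis; thus the computation simultaneously shows that the translation length of the monodromy equals the length of the closed rear track, which is the geometric heart of the statement.
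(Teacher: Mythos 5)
You take essentially the same route as the paper: realize $M$ as the return map of the angle equation, linearize along the periodic solution $\alpha_*$ corresponding to $\theta_0$, and convert $\cos\alpha\,ds$ into the signed rear arclength via \eqref{sphdifflength}. The paper's perturbation $\alpha+\epsilon\beta$, leading to a linear equation for $\beta$ and $M'(\theta_0)=\beta(L)/\beta(0)$, is exactly your variational-equation/multiplier formula written out by hand.

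However, the ``main obstacle'' you flag at the end is genuine, and it does not resolve in your favor: the factor $\cot(l)$ (resp.\ $\coth(l)$) does \emph{not} cancel. Your linearization is the correct one, $\partial F/\partial\alpha=\cot(l)\cos\alpha$, so the multiplier is
\[
M'(\theta_0)=\exp\Bigl(\cot(l)\int_0^S\cos\alpha_*(s)\,ds\Bigr),
\]
which under the paper's sign conventions is $e^{-\cot(l)\,L(\gamma)}$ on the sphere and $e^{-\coth(l)\,L(\gamma)}$ in $H^2$, with $L(\gamma)$ the algebraic length. The paper's own proof silently loses this factor in the step from $\epsilon\beta'=\cot(l)\,\epsilon\beta\cos\alpha$ to $\beta'-\beta\cos\alpha=0$, so the bare formula \eqref{derivlength} is off by it. Two checks confirm the factor belongs there: in the flat limit $\cot l,\coth l\to 1/l$, recovering the Euclidean multiplier $e^{-L(\gamma)/l}$ of \cite{tabachnew}; and at $l=\pi/2$ on the sphere, the paper's remark on derivative curves exhibits front wheel curves whose monodromy is the identity, forcing $M'\equiv 1$ regardless of $L(\gamma)$ --- consistent with $\cot(\pi/2)=0$ but not with $e^{-L(\gamma)}$. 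Your M\"obius consistency check accordingly identifies the translation length of $M$ with $\coth(l)\,L(\gamma)$ rather than $L(\gamma)$. None of this damages the downstream use: Corollary \ref{paraboliclength} only needs $M'(\theta_0)=1$ if and only if $L(\gamma)=0$, which the corrected formula gives whenever $\cot l\neq 0$ (always in $H^2$). So the step you hoped would close with ``no residual constant'' in fact cannot; it is not bookkeeping to be pushed through but a correction to the constant in the statement, and your version of the computation is the accurate one.
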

\begin{proof}
	We consider the spherical case first. If $M$ has a fixed point, we have that there exists an $\alpha(x)$ that is an $L-$periodic solution to \eqref{sphdiffeq}. Consider an infinitesimal perturbation of this solution, $\alpha(x) + \epsilon \beta(x)$. First, we have that $M'(\theta_0) = \dfrac{\beta(L)}{\beta(0)}$. For $\alpha + \epsilon \beta$ to satisfy \eqref{sphdiffeq}, we must have 
	\begin{align*}
		(\alpha + \epsilon \beta)' + \kappa &= \cot(l)\sin(\alpha + \epsilon \beta) \\
		\alpha' + \epsilon \beta' + \kappa &= \cot(l)(\sin \alpha \cos \epsilon \beta + \cos \alpha \sin \epsilon \beta). \\
	\end{align*}
	Using the fact that $\alpha$ is a solution to \eqref{sphdiffeq} and that $\epsilon$ is small, we get
	\begin{align*}
		\cot(l)\sin(\alpha) + \epsilon \beta' &= \cot(l)\sin\alpha + \epsilon \beta \cos\alpha \\
		\beta' - \beta \cos\alpha &= 0.
	\end{align*}
	We then have that
	\[M'(\theta_0) = \frac{\beta(L)}{\beta(0)} = e^{\int_0^L \cos(\alpha(x)) dx} = e^{\text{-Length}(\gamma)} .\]
	In the hyperbolic plane, we see that (after comparison of \eqref{sphdiffeq} to \eqref{hypdiffeq}) the above calculations go through with only minor modifications that do not affect the final result. 
\end{proof}

\begin{corollary}[Both]\label{paraboliclength}
	$M$ is parabolic if and only if the algebraic length of $\gamma$ is 0.
\end{corollary}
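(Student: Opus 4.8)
The plan is to deduce this corollary directly from the formula \eqref{derivlength} together with the standard classification of orientation-preserving isometries of $H^2$ according to the multiplier (derivative) at a boundary fixed point. Recall that by Theorem \ref{theorem:monodromy} the monodromy $M$ is a M\"obius transformation, identified with an element of $SL(2,\mathbb{R})$ acting on $\mathbb{RP}^1 \cong S^1$, and that the existence of a closed rear wheel curve $\gamma$ corresponds precisely to a fixed point $\theta_0$ of $M$ on the circle at infinity. Hence $M$ is either parabolic or hyperbolic, since an elliptic transformation has no boundary fixed point and therefore admits no closed $\gamma$ at all.

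First I would record the characterization of the two relevant types in terms of $M'(\theta_0)$. Conjugating $M$ to normal form in $SL(2,\mathbb{R})$, a hyperbolic element is conjugate to $z \mapsto \mu z$ with $\mu > 0$, $\mu \neq 1$, whose two fixed points $0$ and $\infty$ carry multipliers $\mu$ and $\mu^{-1}$ respectively, neither of which equals $1$; a parabolic element is conjugate to $z \mapsto z + 1$, fixing a single point with multiplier exactly $1$. Consequently, for any nonidentity $M$ possessing a boundary fixed point $\theta_0$, one has $M'(\theta_0) = 1$ if and only if $M$ is parabolic.

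Next I would simply substitute \eqref{derivlength}. Since $M'(\theta_0) = e^{-\text{Length}(\gamma)}$, the condition $M'(\theta_0) = 1$ is equivalent to $\text{Length}(\gamma) = 0$, where the quantity $\text{Length}(\gamma)$ is the algebraic length $L(\gamma)$ by the statement of that theorem. Chaining the equivalences then yields $M$ parabolic $\iff M'(\theta_0) = 1 \iff L(\gamma) = 0$, which is exactly the asserted corollary. As a consistency check, in the hyperbolic case the two fixed points produce curves with multipliers $\mu$ and $\mu^{-1}$, hence algebraic lengths $L$ and $-L$ with $L \neq 0$, matching the expectation that a nonparabolic monodromy never closes up a zero-length curve.

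The only point genuinely requiring care is the reverse implication: ruling out that a hyperbolic monodromy (or the degenerate identity) could yield a closed rear wheel curve of zero algebraic length. This is dispatched by the multiplier computation above, since for hyperbolic $M$ the multiplier at each fixed point is different from $1$, forcing $L(\gamma) \neq 0$; the identity, if one insists on treating it, has multiplier $1$ and is consistent with $L(\gamma) = 0$. Beyond this case analysis the argument is a one-line substitution into \eqref{derivlength}, so I expect no serious obstacle.
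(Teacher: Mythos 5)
Your proof is correct and follows essentially the same route as the paper's: both directions rest on the formula $M'(\theta_0) = e^{-L(\gamma)}$ combined with the fact that a parabolic transformation has multiplier $1$ at its fixed point while a hyperbolic one has reciprocal multipliers $\mu$, $\mu^{-1} \neq 1$ (the paper phrases this as ``the derivatives at fixed points are reciprocal to each other''). Your version merely spells out the normal-form conjugation and the exclusion of the elliptic and identity cases more explicitly than the paper does.
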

\begin{proof}
	In the parabolic case, $M'(\theta_0) = 1$, which implies that $L(\gamma) = 0.$ Conversely, if $L(\gamma) =0$, then $M'(\theta_0) = 1$, which implies that $M$ is parabolic, as the derivatives at fixed points are reciprocal to each other. 
\end{proof}
\begin{corollary}[Both]
	In the parabolic case, $\gamma$ has cusps. 
\end{corollary}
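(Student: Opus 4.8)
The plan is to reduce the statement to the vanishing of a single integral. By Corollary~\ref{paraboliclength}, the parabolic case is precisely the case in which the algebraic length $L(\gamma)$ of the closed rear wheel curve vanishes. So it suffices to show that a closed rear wheel curve of algebraic length zero must carry at least one cusp, and the natural strategy is to express $L(\gamma)$ as an integral of $\cos\alpha$ over one traversal of $\Gamma$ and to identify the cusps of $\gamma$ with the zeros of $\cos\alpha$.

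First I would recall from \eqref{sphdifflength} (and its hyperbolic analogue) that, parameterizing both curves by the arc length $s$ of the front wheel curve $\Gamma$, the speed of the rear wheel is $\left| \frac{dt}{ds} \right| = |\cos\alpha|$. Hence the velocity of $\gamma$ vanishes exactly where $\cos\alpha = 0$, and these are precisely the points at which $\gamma$ has a cusp, where the rear wheel instantaneously stops and reverses direction. Since the algebraic length signs the arc length element so that it flips at each cusp, this sign change coincides with the sign change of $\cos\alpha$, and we obtain (up to an overall sign that is immaterial for the vanishing question) the formula
\[ L(\gamma) = \int_0^{L(\Gamma)} \cos\alpha(s)\, ds, \]
the integral being taken over one full period of the closed curve $\Gamma$. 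This is the same integral that appears in the derivation of \eqref{derivlength}, so no new computation is required to set it up.

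With this in hand the conclusion is immediate. Suppose, for contradiction, that $\gamma$ has no cusps. Then $\cos\alpha$ never vanishes along $\gamma$, so by continuity it has constant sign over the whole period, whence $\int_0^{L(\Gamma)} \cos\alpha\, ds \neq 0$; this contradicts $L(\gamma) = 0$, forcing $\gamma$ to have at least one cusp. The one point that needs care, and the main obstacle, is the degenerate possibility $\cos\alpha \equiv 0$, i.e. $\alpha \equiv \pm\frac{\pi}{2}$: here $\frac{dt}{ds} \equiv 0$, so the rear wheel never moves and $\gamma$ collapses to a point, while by the curvature formulas \eqref{smallcurv} and \eqref{hypercurvature} the geodesic curvature blows up. I would dispose of this case by excluding it as a genuine closed rear wheel curve, after which the sign argument above completes the proof. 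The remaining bookkeeping, namely checking that the sign convention in the definition of algebraic length matches the sign of $\cos\alpha$ through each cusp, is routine.
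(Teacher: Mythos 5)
Your proof is correct and takes essentially the same route as the paper: reduce via Corollary~\ref{paraboliclength} to $L(\gamma)=0$, then observe that zero algebraic length forces the signed arclength element to change sign, which can happen only at a cusp. You simply make the paper's terse two-sentence parity argument explicit by writing $L(\gamma)=\int \cos\alpha(s)\,ds$ and separately excluding the degenerate case $\cos\alpha\equiv 0$, a point the paper leaves implicit.
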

\begin{proof}
	If a closed curve is to have 0 length, there must be arcs with opposite parity. This implies that such a curve must have cusps, as this is the only way to get arcs with different parities. 
\end{proof}

\begin{remark}[$S^2$]
	On the sphere, we have the notion of a derivative curve as discussed in \cite{arnoldsphere}. Given a spherical wave front $\gamma$ we associate to it the curve $\Gamma$ obtained by moving every point a distance of $\frac{\pi}{2}$ in the direction tangent to the curve at that point. This is the same as using $\gamma$ as a back wheel curve for a bicycle of length $\frac{\pi}{2}$. By a result in \cite{arnoldsphere}, we have that the derivative curves are the same for any equidistant front of $\gamma$, and so the $\Gamma$ produced as the front wheel curve has a whole family of back wheel trajectories that generate it. This implies that the monodromy map associated to $\Gamma$ is the identity, as it has an infinite number of fixed points (corresponding to the family of equidistance fronts of $\gamma$ that generate it). 
\end{remark}
	\subsection{A Spherical Menzin's Conjecture} 
		In \cite{tabachnew}, Levi and Tabachnikov prove an old conjecture of Menzin for bicycle curves in the plane. The conjecture states that if a closed, convex front wheel curve bounds area greater than $\pi$ (the area of a unit circle) then the associated monodromy map is hyperbolic (for $l = 1$, other choices of $l$ simply scale the value $\pi$). On the sphere, a circle of radius $l$ has area $2 \pi (1 - \cos l )$, so it seems plausible that the monodromy map would be hyperbolic in the case that a closed, convex front wheel curve bounds area greater than this. This conjecture is in fact true, as the following shows.
\begin{theorem}[$S^2$]\label{theorem:menzin}
	Let $\Gamma$ be a closed, convex curve, oriented properly. Then if $A(\Gamma)$ is greater than $2 \pi(1 - \cos l)$, the corresponding monodromy map is hyperbolic. 
\end{theorem}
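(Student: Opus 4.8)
The plan is to establish the contrapositive structure: I will show that the hypothesis $A(\Gamma) > 2\pi(1-\cos l)$ forces the monodromy $M$ to possess \emph{two} fixed points on the circle at infinity. Since the Möbius transformations here are identified with isometries of $H^2$, which are hyperbolic exactly when they have two boundary fixed points, parabolic with one, and elliptic with none, it suffices to exclude both the parabolic and the elliptic cases. The first useful observation is that the threshold $2\pi(1-\cos l)$ is precisely the area enclosed by a geodesic circle of radius $l$; because $\Gamma$ is convex I would work in the regime $0 < l < \pi/2$ (so $\cos l > 0$) and $0 < A(\Gamma) < 2\pi$, where the bound is meaningful.

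The heart of the argument is a lemma excluding the parabolic case: if $M$ is parabolic then $A(\Gamma) \le 2\pi(1-\cos l)$. Indeed, a parabolic $M$ has a fixed point $\theta_0$, which by the discussion preceding Corollary \ref{paraboliclength} corresponds to a \emph{closed} rear wheel curve $\gamma$ with $L(\gamma)=0$. I would then compute $\acc(\gamma)$ in terms of $A(\Gamma)$: parameterizing by the arc length $s$ of $\Gamma$ (total length $\Lambda$) and writing $dt = \cos\alpha\, ds$ for the signed arc length of $\gamma$, the formula $k = \tan(\alpha)/\sin(l)$ from Corollary \ref{smallcurv} gives
\[
\acc(\gamma) = \int_\gamma k \, dt = \frac{1}{\sin l}\int_0^\Lambda \sin\alpha \, ds,
\]
while integrating the bicycle equation \eqref{sphdiffeq} over the closed curve and using $\int_\Gamma \kappa = 2\pi - A(\Gamma)$ yields $\int_0^\Lambda \sin\alpha\, ds = \tan(l)\,(2\pi - A(\Gamma))$, hence $\acc(\gamma) = (2\pi - A(\Gamma))/\cos l$. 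Feeding this together with $L(\gamma)=0$ into the spherical isoperimetric inequality \eqref{wcurve}, namely $\acc(\gamma)^2 + L(\gamma)^2 \ge 4\pi^2$, produces $(2\pi - A(\Gamma))^2 \ge 4\pi^2 \cos^2 l$, and taking the positive root (valid since $2\pi - A(\Gamma) > 0$) gives $A(\Gamma) \le 2\pi(1-\cos l)$. By contraposition the hypothesis forces $M$ to be non-parabolic.

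It remains to exclude the elliptic case, which cannot be attacked directly since an elliptic $M$ has no fixed point and hence no closed $\gamma$ to insert into the inequality. Here I would run a continuity argument in the bicycle length. Fixing $\Gamma$, let $M_l$ denote the monodromy of the bicycle of length $l$; by Theorem \ref{smallhyperbolic}, $M_l$ is hyperbolic for all sufficiently small $l$. Because $2\pi(1-\cos l)$ is increasing on $(0,\pi)$, the hypothesis $A(\Gamma) > 2\pi(1-\cos l_0)$ gives $A(\Gamma) > 2\pi(1-\cos l)$ for every $l \in (0, l_0]$, so by the lemma no $M_l$ in this range is parabolic. The type of $M_l$ is governed by the trace of the associated $SL(2,\rr)$ matrix, which depends continuously on $l$ through the solutions of \eqref{sphdiffeq}; any transition between hyperbolic ($|\mathrm{tr}|>2$) and elliptic ($|\mathrm{tr}|<2$) would have to cross the parabolic locus ($|\mathrm{tr}|=2$). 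Since that locus is avoided on the connected interval $(0,l_0]$ and $M_l$ is hyperbolic near $0$, it stays hyperbolic at $l_0$.

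The main obstacle I anticipate is the rigorous justification that inequality \eqref{wcurve} may be applied to the rear wheel curve $\gamma$ produced at the fixed point. Theorem \ref{thm:wcurve} requires $\gamma$ to have no inflection points and an even number of double-tangent points; since the geodesic curvature $k = \tan(\alpha)/\sin l$ vanishes exactly when $\alpha \in \{0,\pi\}$, I would need the convexity of $\Gamma$ to preclude such degenerate angles along the closed trajectory, and likewise to control the parity of the double-tangent count. A secondary bookkeeping issue is the boundary term $\alpha(\Lambda)-\alpha(0)$ that I dropped when integrating \eqref{sphdiffeq}: one must check that at the relevant fixed point the lift of $\alpha$ genuinely returns to its initial value (turning number zero for the segment direction relative to $\Gamma'$) rather than differing by a multiple of $2\pi$, so that the clean identity $\acc(\gamma) = (2\pi - A(\Gamma))/\cos l$ is exact.
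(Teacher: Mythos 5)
Your argument is, in structure and in every computation, essentially the proof the paper gives: the same small-$l$ hyperbolicity (Theorem \ref{smallhyperbolic}), the same continuity-in-$l$ passage through a parabolic length $l' \leq l$ (the paper phrases this as a direct contradiction rather than your contrapositive-lemma-plus-trace-continuity formulation, but the content is identical), the same integration of \eqref{sphdiffeq} using periodicity of $\alpha$ to obtain $\acc(\Gamma) = \cos(l')\,\acc(\gamma)$ (the paper's Lemma \ref{lem:curvaturerelation}), the same use of $L(\gamma)=0$ at the parabolic fixed point via Corollary \ref{paraboliclength}, the same appeal to the wave front isoperimetric inequality \eqref{wcurve}, and the same monotonicity of $2\pi(1-\cos l)$ to pass from $l'$ back to $l$.

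However, there is one genuine gap, and it is precisely the one you flag yourself in your final paragraph but do not resolve: you never verify that the closed rear wheel curve $\gamma$ satisfies the hypotheses of Theorem \ref{thm:wcurve}, namely that it has no inflection points and an even number of double-tangent points. This is not routine bookkeeping --- $\gamma$ is a genuine wave front (it necessarily has cusps in the parabolic case, since $L(\gamma)=0$), and a priori nothing prevents $k = \tan(\alpha)/\sin l$ from vanishing somewhere. The paper devotes Lemma \ref{smoothdual} to exactly this point, and it is the crux on which the application of \eqref{wcurve} stands. The parity of double tangents is controlled by observing that the dual curves $\gamma^*_\ell$ vary by a regular homotopy as $\ell$ grows (valid only once one knows there are no inflections at any intermediate $\ell$), starting from the smooth dual of a convex curve, so each $\gamma^*_\ell$ is regularly homotopic to a circle traversed once and has an even number of self-intersections, which dualize to double tangencies. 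The absence of inflections is shown by substituting $\alpha = \arctan(k\sin l)$ into \eqref{sphdiffeq} to get an identity relating $k'$, $k$, and $\kappa$; if $k$ first vanished at some intermediate length, then for slightly larger lengths a dimple in $\gamma$ would produce a point with $k'(s)=0$ and $k(s)<0$, forcing $\kappa(s)<0$ and contradicting the convexity of $\Gamma$. Without this lemma or a substitute, your invocation of \eqref{wcurve} is unjustified and the proposal is incomplete. (Your secondary worry about the boundary term $\int \alpha'\,ds$ is legitimate but minor: the paper disposes of it by periodicity of $\alpha$ at the fixed point, the rotation number being zero since the solution is continued from the small-$l$ regime where $\alpha$ remains near a constant.)
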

\begin{proof}
	We argue by contradiction, assuming that $A(\Gamma) > 2\pi(1-\cos l)$ and $M(\Gamma)$ is not hyperbolic. We need to show that 
	\begin{align*}
		A(\Gamma) &\leq 2\pi(1 - \cos l) \\
		2\pi - \acc(\Gamma) &\leq 2\pi(1-\cos l) \\
		\acc(\Gamma) &\geq 2 \pi \cos l.
	\end{align*}
	Let the length of the bicycle vary as $tl$ (with $t$ from 0 to 1). For small $t$, we have that $M$ is hyperbolic, by Theorem \ref{smallhyperbolic}. This cannot continue for all $t$, however, so there must be a $t_0 l = l' \leq l$ such that $M$ is parabolic. Let $\gamma$ be the closed back wheel curve corresponding to the fixed point of $M$ for $l'$. Because $l' \leq l$ we have that $\cos l' \geq \cos l$, so to establish the above inequality it suffices to show that 
	\[\acc(\Gamma) \geq 2 \pi \cos l'.\]
	We now prove a lemma relating the curvature of the back and front wheels that will allow us to simplify this inequality.
	
	\begin{lemma}\label{lem:curvaturerelation} Letting $\kappa$ be the curvature of the front wheel and $k$ the curvature of the back wheel, we have
		\begin{equation}\label{curvaturerelation} \int\kappa(s)ds=\cos(l)\int k(t)dt, \end{equation}
		where integrals are taken over their respective curves and the elements of arclength are signed. 
		Equivalently,
		\begin{equation}
			\acc(\Gamma) = \cos(l)\acc(\gamma).
		\end{equation}
	\end{lemma}
	\begin{proof}
		We first rewrite the integral of the curvature of $\gamma$ using \eqref{smallcurv} \begin{eqnarray*} 
		\int k(t)dt & = & \csc(l)\int\tan(\alpha(t))dt\\
		 & = & \csc(l)\int\tan(\alpha(s))\frac{dt}{ds}ds\\
		 & = & \csc(l)\int\tan(\alpha(s))\cos(\alpha(s))ds\\
		 & = & \csc(l)\int\sin(\alpha(s))ds.\end{eqnarray*}

		Note that $\cos(\alpha(s))$ is signed to reflect changes when passing through cusps. Integrating both sides of \eqref{sphdiffeq}, we get
		\[ \int\left(\frac{d\alpha(s)}{ds}+\kappa(s)\right)ds =  \int \cot(l)\sin(\alpha(s))ds .\]
		As $\alpha$ is periodic, the first term on the left hand side is 0, so combining this with the above we have 
		\begin{eqnarray*}
			\int\kappa(s)ds & = & \cot(l) \int\sin(\alpha(s))ds\\
			 & = & \cos(l)\int k(t) dt.
		\end{eqnarray*}
	\end{proof}
	\begin{remark}\label{hypcurvrel}
		If instead of using \eqref{smallcurv} and \eqref{sphdiffeq}, we use their hyperbolic counterparts \eqref{hypercurvature} and \eqref{hypdiffeq}, we obtain the following for $H^2$:
			\begin{equation*}\int\kappa(s)ds=\cosh(l)\int k(t)dt.\end{equation*}
	\end{remark}	
	In light of \eqref{curvaturerelation} and what is discussed in \cite{arnoldsphere}, the inequality that we seek to show is 
	\[ \int k = \acc(\gamma) \geq 2 \pi .\]
	To complete the proof, we need the following lemma.
	\begin{lemma}\label{smoothdual}
		The rear wheel curve $\gamma$ has no inflection points and an even number of double tangent points.
	\end{lemma}
	\begin{proof}
		First we show that $\gamma$ has an even number of double tangent points. 
		Consider the family of dual curves $\gamma^*_{\ell}$, as defined in \ref{def:dual}. When $\ell$ is small we know that $\gamma_{\ell}$ is convex, and therefore $\gamma^*_{\ell}$ is smooth and has no inflections. As $\ell$ varies $\gamma^*$ changes by a regular homotopy, since $\gamma_{\ell}$ has no inflection points for any values of $\ell$. Then $\gamma^*_{\ell}$ is regularly homotopic to a circle traversed once for all $\ell$ and thus has an even number of intersections. Therefore $\gamma_{\ell}$ has an even number of double-tangent points.
		
		Now we show that $\gamma$ has no inflection points. First, we compute an expression for $\kappa$ in terms of $k$. Using \eqref{smallcurv}, we get
		\[ k = \frac{\tan(\alpha)}{\sin l} \quad \rightarrow \quad \alpha = \arctan(k \sin l). \]
		Placing this expression into \eqref{sphdiffeq}, we obtain
		\begin{align*}
			\frac{d \arctan(k \sin l)}{ds} + \kappa &= \cot l \sin( \arctan (k \sin l)) \\
			\frac{k'}{1 + (k \sin l)^2} + \kappa &= \cot l  \frac{k \sin l}{\sqrt{1 + (k \sin l )^2}}.
		\end{align*}
		Suppose that there is an inflection point of $\gamma$. Consider the family of fronts $\gamma_l$, for varying $l$, and let $l'$ be the least value of $l$ for which $k =0$. For $l$ slightly greater than $l'$, there is a dimple in $\gamma$ that causes $k$ to change sign twice, starting from a positive value. Then there exists an $s$ such that $k'(s) = 0$ and $k(s) < 0$, and by the above equation we have that $\kappa < 0$. However, because $\Gamma$ is convex, we must have that $\kappa \geq 0$, and so this is impossible. 
	\end{proof}
	In light of the above lemma, we can use Theorem \ref{thm:wcurve}, which tells us that
	\[ \acc(\gamma)^{2} + L(\gamma)^{2} \geq 4 \pi^{2}.\]
	By Corollary \ref{paraboliclength}, we also have that $L(\gamma) = 0$, as $M(\Gamma)$ is parabolic. Putting it all together, we get that 
	\[ 
	\acc(\gamma) \geq 2 \pi,
	\]
	which gives us the desired contradiction.
\end{proof}
	\subsection{A Hyperbolic Menzin's Conjecture}
		We now prove a version of the previous theorem in the hyperbolic plane. Because of some peculiarities of the hyperbolic plane, the hyperbolic Menzin's Conjecture is proven only for horocyclically convex curves (i.e. hyperbolic curves with curvature $\geq 1$).
	\begin{theorem}
		Let $\Gamma$ be a closed, horocyclically convex curve in the hyperbolic plane. Then if $A(\Gamma)$ is greater then $(2\pi \cosh{l} - 1)$,
		the corresponding monodromy map is hyperbolic. 
	\end{theorem}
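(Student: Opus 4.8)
The plan is to mirror the proof of the spherical Menzin theorem (Theorem \ref{theorem:menzin}), replacing each trigonometric ingredient by its hyperbolic counterpart and the spherical isoperimetric inequality (Theorem \ref{thm:wcurve}) by the hyperbolic isoperimetric inequality $L(\gamma)^2 + 4\pi^2 - C(\gamma)^2 \ge 0$ proven at the start of this section. I argue by contradiction: assume $\Gamma$ is horocyclically convex with turning number one and $A(\Gamma) > 2\pi\cosh l - 1$, yet $M(\Gamma)$ is not hyperbolic. Letting the frame length run as $tl$ for $t\in[0,1]$, Theorem \ref{smallhyperbolic} makes the monodromy hyperbolic for small $t$, while it is non-hyperbolic at $t=1$; since the monodromy varies continuously as a M\"obius transformation, there is a value $t_0$ with $l' = t_0 l \le l$ at which $M$ is parabolic. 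Let $\gamma$ be the closed rear-wheel curve attached to the unique fixed point of this parabolic monodromy at length $l'$, so that $L(\gamma) = 0$ by Corollary \ref{paraboliclength}.

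Granting for the moment that $\gamma$ is a horocyclically convex front of turning number one, I would assemble the estimate as follows. Since $\Gamma$ is convex and hence simple, Gauss--Bonnet gives $A(\Gamma) = C(\Gamma) - 2\pi$, while the hyperbolic curvature relation recorded in the Remark after Lemma \ref{lem:curvaturerelation}, $\int\kappa\,ds = \cosh(l')\int k\,dt$, reads $C(\Gamma) = \cosh(l')\,C(\gamma)$. The hyperbolic isoperimetric inequality applied to $\gamma$ together with $L(\gamma) = 0$ forces $C(\gamma)^2 \le 4\pi^2$, hence $C(\gamma) \le 2\pi$. As $\cosh l' > 0$ this yields
\[ A(\Gamma) = \cosh(l')\,C(\gamma) - 2\pi \le 2\pi\cosh l' - 2\pi \le 2\pi(\cosh l - 1), \]
the last inequality because $l'\le l$. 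Since $2\pi(\cosh l - 1) < 2\pi\cosh l - 1$, this contradicts the hypothesis and in fact establishes the sharper threshold $2\pi(\cosh l - 1)$, the area of a hyperbolic disk of radius $l$ and the exact analogue of the spherical threshold $2\pi(1-\cos l)$.

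The heart of the matter, and the step I expect to be the main obstacle, is justifying that the rear-wheel curve $\gamma$ is horocyclically convex ($|k|\ge 1$) with turning number one --- the hyperbolic analogue of Lemma \ref{smoothdual} and precisely where horocyclic convexity of $\Gamma$ must be used. Following the method of Lemma \ref{smoothdual}, I would solve \eqref{hypercurvature} for $\alpha = \arctan(k\sinh l)$ and substitute into \eqref{hypdiffeq} to obtain an ODE relating $k$ to $\kappa$; at any critical point of $k$ the derivative term drops out and it reduces to
\[ \kappa = \frac{k\cosh l}{\sqrt{1 + k^2\sinh^2 l}}. \]
The right-hand side is an increasing odd function of $k$ equal to $1$ exactly when $k = 1$ --- here the identity $\cosh^2 l - \sinh^2 l = 1$ is what makes this value come out to $1$ --- so $\kappa \ge 1$ forces $k^2 \ge 1$ at every critical value of $k$. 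Because $|k|\to\infty$ at each cusp of $\gamma$ (where $\alpha\to\pi/2$), the signed curvature cannot descend into the forbidden band $(-1,1)$ without creating an interior critical point there, which the estimate rules out; thus $|k|\ge 1$ on every smooth arc. The turning number is then pinned to one by a continuity argument: as the frame length decreases to $0$ the curve $\gamma$ deforms through horocyclically convex fronts onto the turning-number-one curve $\Gamma$, and the turning number is invariant along this deformation.

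The delicate points still to be discharged are the sign bookkeeping for $k$ across cusps dictated by the co-orientation convention (so that ``$k^2 \ge 1$ at critical points'' indeed propagates to $|k|\ge 1$ globally), and the verification that the monodromy passes continuously through a genuine parabolic value rather than jumping type --- both of which are already handled in spirit in the spherical argument and should transfer with only the substitution of hyperbolic functions.
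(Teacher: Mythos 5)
Your proposal is correct and follows the paper's own proof essentially line for line: argue by contradiction, vary the frame length as $tl$ until the monodromy becomes parabolic at some $l' \le l$, use Corollary \ref{paraboliclength} to get $L(\gamma)=0$, use the relation $C(\Gamma)=\cosh(l')\,C(\gamma)$ from the remark after Lemma \ref{lem:curvaturerelation}, and apply the hyperbolic isoperimetric inequality to force $C(\gamma)\le 2\pi$. In fact you go slightly beyond the paper, which applies the isoperimetric inequality to $\gamma$ without verifying that $\gamma$ is horocyclically convex with turning number one --- your critical-point analysis of $\kappa = k\cosh l'/\sqrt{1+k^2\sinh^2 l'}$ supplies the missing hyperbolic analogue of Lemma \ref{smoothdual} --- and you correctly observe that the stated threshold $2\pi\cosh l - 1$ is a misprint for $2\pi(\cosh l - 1)$, which is what the paper's proof actually assumes.
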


	\begin{proof}
		We argue by contradiction, assuming that $A(\Gamma) > 2\pi(\cosh l - 1)$ and $M(\Gamma)$ is not hyperbolic. We need to show that 
					\begin{align*}
						A(\Gamma) &\leq 2\pi(\cosh l - 1) \\
						C(\Gamma) - 2\pi &\leq 2\pi(\cosh l - 1) \\
						C(\Gamma) &\leq 2 \pi \cosh l.
					\end{align*}
		Let the length of the bicycle vary as $t l$. For small $t$, we have that $M$ is hyperbolic. There exists a $t_0 l = l' \leq l$ such that $M$ is parabolic. Let $\gamma$ be the closed back wheel curve corresponding to the fixed point of $M$ for $l'$. Because $l' \leq l$ we have that $\cosh l' \leq \cosh l$, so to establish the above inequality it suffices to show that 
		\[C(\Gamma) \leq 2 \pi \cosh l'.\]
		Since 
		\[C(\Gamma) = \cosh(l')C(\gamma)\] 
		we need to show
		\[C(\gamma) \leq 2 \pi .\] 
		By virtue of the generalized isoperimetric inequality proven above we have  
		\[ L(\gamma)^{2} + 4\pi^2 - C(\gamma)^2 \geq 0.\]
		We also have that $L(\gamma) = 0$, as $M(\Gamma)$ is parabolic. Putting it all together, we get that 
		\[ C(\gamma) \leq 2 \pi\]
		which gives us the desired contradiction.
\end{proof}
\section{Remarks and Conclusion}
Our initial goal in this paper was to generalize to surfaces of constant
curvature the results in \cite{tabachnew} on bicycle monodromy
in the Euclidean plane. Largely we have succeeded, providing analogs
in both $S^{2}$ and $H^{2}$ to all of their major results. In addition,
our method of attack led to the development of a useful tool - the
isoperimetric inequality for wavefronts. However, there is one aspect
of Menzin's conjecture in $H^{2}$ that we have not been able to satisfactorily
address. In both $E^{2}$ and $S^{2}$ we require convexity for the
front wheel curve, but in $H^{2}$ our version of the conjecture requires
the front wheel curve to be horocyclically convex. The question is
then whether this requirement is necessary or just an artifact of
our proof technique. Our guess is the former, but we lack a proof
of the fact or an example of a convex but not horocyclically convex
front wheel path in the hyperbolic plane bounding the correct amount
of area but without hyperbolic monodromy. This question will need
to be resolved before we can really close the book on Menzin's conjecture
in $H^{2}.$
\bibliographystyle{amsplain}
\bibliography{bib}
\end{document}